\documentclass[letterpaper, 11pt, reqno]{amsart}
\usepackage[margin=1.2in,marginparwidth=1.5cm, marginparsep=0.5cm]{geometry}
\usepackage{amsmath, mathrsfs, amssymb,amscd,amsthm,amsxtra, esint}
\usepackage{tikz} 
\usepackage{color}

\usepackage[implicit=true]{hyperref}

\usepackage{cases}  
\usepackage{upgreek} 

\usepackage{ulem} 
\usepackage[thicklines]{cancel}
\allowdisplaybreaks[2]

\definecolor{gr}{rgb}   {0.,   0.69,   0.23 }
\definecolor{bl}{rgb}   {0.,   0.5,   1. }
\definecolor{mg}{rgb}   {0.85,  0.,    0.85}
\definecolor{yl}{rgb}   {0.8,  0.7,   0.}
\definecolor{or}{rgb}  {0.7,0.2,0.2}

\usetikzlibrary{shapes.misc}
\usetikzlibrary{shapes.symbols}
\usetikzlibrary{shapes.geometric}

\tikzset{
	ddot/.style={circle,fill=white,draw=black,inner sep=0pt,minimum size=0.8mm},
	>=stealth,
	}

\tikzset{
	ddot2/.style={circle,fill=black,draw=black,inner sep=0pt,minimum size=0.8mm},
	>=stealth,
	}

\newtheorem{theorem}{Theorem} [section]

\newtheorem{lemma}[theorem]{Lemma}
\newtheorem{proposition}[theorem]{Proposition}
\newtheorem{remark}[theorem]{Remark}

\newtheorem{definition}[theorem]{Definition}
\newtheorem{corollary}[theorem]{Corollary}

\newtheorem{oldtheorem}{Theorem}

\DeclareMathOperator{\com}{com}

\DeclareMathOperator{\Id}{Id}

\DeclareMathOperator{\Ker}{Ker}

\newcommand{\I}{\mathcal{I}}

\newcommand{\noi}{\noindent}
\newcommand{\Z}{\mathbb{Z}}
\newcommand{\R}{\mathbb{R}}

\newcommand{\T}{\mathbb{T}}
\newcommand{\bul}{\bullet}

\newcommand{\CC}{\mathcal{C}}

\renewcommand{\L}{\mathcal{L}}

\newcommand{\F}{\mathcal{F}}

\newcommand{\al}{\alpha}

\newcommand{\dl}{\delta}
\newcommand{\updl}{\updelta}

\newcommand{\Dl}{\Delta}
\newcommand{\eps}{\varepsilon}

\newcommand{\g}{\gamma}
\newcommand{\G}{\Gamma}
\newcommand{\ld}{\lambda}
\newcommand{\Ld}{\Lambda}
\newcommand{\s}{\sigma}

\newcommand{\ft}{\widehat}

\newcommand{\wt}{\widetilde}

\newcommand{\dx}{\partial_x}
\newcommand{\dt}{\partial_t}

\newcommand{\ta}{\theta}

\newcommand{\les}{\lesssim}

\newcommand{\jb}[1]
{\langle #1 \rangle}

\newcommand{\pa}{\partial}

\newcommand{\N}{\mathbb{N}}

\newcommand{\cL}{\mathcal{L}}
\newcommand{\cC}{\mathcal{C}}

\newcommand{\cX}{\mathcal{X}}

\newcommand{\Lip}{\mathrm{Lip}}
\newcommand{\uw}{U^w}
\newcommand{\uu}{\mathbf{u}}
\newcommand{\vv}{\mathbf{v}}
\newcommand{\z}{\zeta}

\newcommand{\Ta}{\Theta}

\newcommand{\KDV}{\text{\rm KdV} }

\newcommand{\too}{\longrightarrow}

\newtheorem*{ackno}{Acknowledgements}

\numberwithin{equation}{section}
\numberwithin{theorem}{section}

\makeatletter
\@namedef{subjclassname@2020}{\textup{2020} Mathematics Subject Classification}
\makeatother

\begin{document}
\baselineskip = 14pt

\title[Refined GWP of  the modulated KdV]{Refined global well-posedness for the periodic modulated Korteweg-de Vries equation}

\author[D.~Greco, M.~Gubinelli, S.~Liu, and T.~Oh]
{Damiano Greco, Massimiliano Gubinelli, Shao Liu,  and Tadahiro Oh}

\address{Damiano Greco, School of Mathematics\\
The University of Edinburgh\\
and The Maxwell Institute for the Mathematical Sciences\\
James Clerk Maxwell Building\\
The King's Buildings\\
Peter Guthrie Tait Road\\
Edinburgh\\ 
EH9 3FD\\
United Kingdom}

\email{dgreco@ed.ac.uk}

\address{Massimiliano Gubinelli, Mathematical Institute\\ 
University of Oxford\\ 
United Kingdom}

\email{gubinelli@maths.ox.ac.uk}

\address{
Shao Liu, Mathematical Institute\\
University of Bonn\\
Endenicher Allee 60\\
53115\\
Bonn\\
Germany}

\email{shaoliu@math.uni-bonn.de}

\address{
Tadahiro Oh, 
School of Mathematics\\
The University of Edinburgh\\
and The Maxwell Institute for the Mathematical Sciences\\
James Clerk Maxwell Building\\
The King's Buildings\\
Peter Guthrie Tait Road\\
Edinburgh\\
EH9 3FD\\
 United Kingdom\\
and School of Mathematics and Statistics, Beijing Institute of Technology, Beijing 100081, China}
%

%

\email{hiro.oh@ed.ac.uk}

\subjclass[2020]{35Q53, 60H15,  60H50, 60L90}

\keywords{modulated dispersion;   Korteweg-de Vries equation;
Young integral; sewing lemma; global well-posedness; $I$-method}

\begin{abstract}
We revisit the pathwise global well-posedness issue of 
 the modulated Korteweg-de Vries equation (KdV) on the circle.
In the previous work (2024), 
by combining the $I$-method and the sewing lemma, 
the second and fourth authors with C.\,Chouk, G.\,Li, and J.\,Li
proved its global well-posedness in negative Sobolev spaces.
This result was, however, restricted to 
 the scaling subcritical regime
$s > - \frac 32$
due to the use of 
the classical KdV scaling.
In this paper, by noting that the modulated KdV
enjoys 
additional one degree of  freedom in its scaling symmetry
 thanks to the modulation term, 
 we apply  a non-KdV scaling to the unknown
and 
prove that, given  
 {\it any} $s \in \mathbb R$,  the modulated KdV on the circle with a sufficiently irregular modulation is globally well-posed in $H^s(\T)$, 
thus  going beyond the barrier of the scaling critical 
 regularity $s = - \frac 32$.

\end{abstract}

%
\maketitle
%


\tableofcontents

\newpage

\section{Introduction} 
\subsection{Modulated Korteweg-de Vries equation} 
\label{SSEC:moKdV}

In this paper, we study global well-posedness of 
 the following modulated Korteweg-de Vries equation (KdV) on the circle
$\T=\R/(2\pi\Z)$:
\begin{align} 
\begin{cases}
    \dt u + \dx^3 u \cdot \dt w = \dx u^2 \\
    u|_{t=0} = u_0,
\end{cases} 
\label{kdv1}
\end{align}

\noi
where 
$u \colon \R\times \T \to \R$  is the unknown and 
 $w \colon \R\to\R$ is a continuous function of time, 
called a {\it modulation}. 
The modulated KdV \eqref{kdv1} naturally appears as a model for weakly nonlinear long waves in an inhomogeneous waveguide; see \cite{CMG, HZ}, where the modulation $w$ is taken to be periodic but not differentiable.

In \cite{DD, DT}, 
de Bouard,  Debussche, and Tsutsumi
used stochastic calculus to study 
the modulated nonlinear Schr\"odinger equation (NLS), 
where the modulation function $w$ is given by a Brownian motion; see also \cite{DR2,Ste}. 
For a general modulation function $w$, however, such an approach based on stochastic calculus is not available.
In \cite{CG1, CGLLO}, C.\,Chouk and the second author 
with their coauthors
developed a novel pathwise approach to study 
modulated dispersive equations, exploiting 
the following notion of 
{\it irregularity} of the modulation function~$w$
introduced in~\cite{CG16}.

\begin{definition}
\label{DEF:ir}
\rm
Let $\rho > 0$ and $0 < \gamma < 1$. Given $T > 0$, we say that a function $w \in C( [0, T]; \R)$ is $(\rho, \gamma)$-irregular on the time interval $[0, T]$ if we have 
\begin{align} 
\label{rho1}
    \| \Phi^{w} \|_{\mathcal{W}^{\rho, \gamma}_{T}} := \sup_{a \in \R} \sup_{0 \leq r < t \leq T} \jb{a}^{\rho} \frac {|\Phi^{w}_{t, r} (a)|}{|t -r|^{\gamma}} < \infty, 
\end{align}

\noi 
where 
\begin{align} 
\label{rho2}
    \Phi^w_{t, r} (a) = \int_{r}^{t} e^{i a w (t')} dt'. 
\end{align} 

\noi 
We say that $w$ is $(\rho, \gamma)$-irregular on $\R_{+}$ if it is $(\rho, \gamma)$-irregular on $[0, T]$ for each finite $T>0$. 
\end{definition}

See Remark \ref{REM:BM1}
for important examples of irregular paths and 
a further discussion.

\medskip

Let us briefly go over the main ideas of the pathwise approach in \cite{CG1, CGLLO}
by restricting our attention to the modulated KdV \eqref{kdv1}.
We first note that 
the equation \eqref{kdv1}
is merely formal since the 
 time derivative of the modulation  $w$ does not exist in general.
  To bypass this problem, we consider the following Duhamel formulation (= mild formulation) of \eqref{kdv1}, 
  involving only the modulation $w$ and not its time derivative: 
\begin{align}
\label{mild1} 
    u (t) = U^{w} (t) u_0 + U^{w} (t) \int_{0}^{t} U^{w} (t')^{-1} \pa_x \big(u (t')^{2} \big) dt', 
\end{align} 

\noi 
where $U^{w} (t) = e^{- w (t) \pa_x^3}$ denotes the modulated linear propagator.
Here, 
 we impose $w (0) = 0$ such that $U^{w} (0) = \rm{Id}$.\footnote{Note that the normalization $w(0) = 0$ is not an additional restriction since only the time derivative $\dt w$
 appears in the modulated equation \eqref{kdv1}.}  
Let $\uu$
denote  the 
 modulated interaction representation
 of the unknown $u$ defined by 
\begin{align*}
\uu (t) = U^{w} (t)^{-1} u(t).
\end{align*}

\noi 
Then, 
the Duhamel formulation \eqref{mild1}
becomes
\begin{align}
\label{mild3} 
\uu (t) = u_0 + \int_{0}^{t} \uw (t')^{-1} \pa_{x} \big( ( \uw (t') \uu (t'))^2 \big) d t'. 
\end{align} 

\noi
Thanks to the absence of $\uw (t)$
in front of the time integral (as compared to \eqref{mild1}), 
the integral equation \eqref{mild3} allows
us to exploit temporal regularity 
of $\uu$
(just as in 
the Fourier restriction norm method \cite{BO93}). 
The main novelty in \cite{CG1, CGLLO}
is to give a meaning
to 
the integral term in~\eqref{mild3} 
as a nonlinear Young integral 
$\I^{X}(\uu)$ 
associated
with the bilinear driver~$X$
defined in~\eqref{bd0} (see also \eqref{Xld1}), 
thus reducing \eqref{mild3}
to the following 
 nonlinear Young differential equation (YDE):
\begin{align}
    \uu = u_0 + \I^{X} (\uu) 
\label{YDE0}
\end{align} 

\noi 
whose local well-posedness follows
from a simple contraction argument;
see Subsection~\ref{SUBSEC:YDE}
for a brief 
review on the construction of nonlinear Young integrals
and local well-posedness of a  nonlinear YDE
of the form  \eqref{YDE0}.
See also 
\cite[Section 3]{CGLLO}
and \cite[Section 4]{CGLLO2}
for an extensive review  on the subject.
The actual construction of the nonlinear Young integral
$\I^{X}(\uu)$
in \cite{CG1, CGLLO}
was based on 
 the sewing lemma (Lemma \ref{LEM:sew})
due to  the second author \cite{Gub04}, 
reducing the matter to studying
 mapping properties of the bilinear driver $X$
for the modulated KdV; see Lemma \ref{LEM:tri1}
(with $\ld  = 1$).

We now recall
the local and global well-posedness results
for the modulated KdV \eqref{kdv1} on~$\T$;
see
\cite[Theorem 1.3]{CGLLO}.
See Remark \ref{REM:sol1}
for the precise meaning of a solution.

\begin{oldtheorem}
\label{THM:old}
Given $\rho \ge\frac 12$,  $\frac12< \g < 1$, and $T> 0$, 
let  $w$ be $(\rho,\g)$-irregular on $[0, T]$.

\smallskip

\noi
\textup{(i)}
Suppose that $\rho \ge \frac 12$ and $s\in \R$
satisfy one of the following conditions\textup{:}
\begin{align}
\begin{split}
\textup{(i.a)} &\ \  \tfrac 12 \le \rho \le \tfrac 34 \quad \text{and} \quad 
s > \tfrac 32 - 3 \rho, \\
\textup{(i.b)} &\ \  \rho > \tfrac 34\quad  \text{and} \quad  s \ge - \rho.
\end{split}
\label{reg1}
\end{align}

\noi
Then, the modulated KdV equation \eqref{kdv1}
on  $\T$
is semilinearly locally well-posed in $H^s(\T)$.

\smallskip

\noi
\textup{(ii)}
In addition, suppose that 
  $w$ is $(\rho,\g)$-irregular on $\R_+$
and  that $\rho > \frac 12$ and $s\in \R$
satisfy one of the following conditions\textup{:}
\begin{align}
\begin{split}
\textup{(ii.a)} &\ \ 
\tfrac 12 < \rho \le \tfrac 3{4\g}
 \quad \text{and} \quad 
s > \tfrac {3- 6\rho}{6-4\g}, \\
\textup{(ii.b)} &\ \ 
\tfrac 3{4 \g}< \rho < \tfrac 32
\quad  \text{and} \quad  s \ge - \rho,\\
\textup{(ii.c)} &\ \ 
  \rho \ge \tfrac 32
\quad  \text{and} \quad 
s  > -\tfrac 32.
\end{split}
\label{s1}
\end{align}

\noi
Then, 
 the modulated KdV equation \eqref{kdv1}
on $\T$
is globally  well-posed in $H^s(\T)$.

\end{oldtheorem}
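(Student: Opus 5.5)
Part (i) is a contraction argument for the nonlinear Young differential equation \eqref{YDE0}. Part (ii) globalizes it with the $I$-method, where the sewing lemma controls the modified energy increment, and closes with the KdV scaling.

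\textbf{Local theory.} I would work in $C^\gamma_T H^s$ for $\uu$. The bilinear driver is
\[
X_{t,r}(f,g)=\int_r^t U^w(t')^{-1}\partial_x\bigl(U^w(t')f\cdot U^w(t')g\bigr)\,dt'.
\]
On the Fourier side it is a sum over $n=n_1+n_2$ of $i n\,\Phi^w_{t,r}(\Omega)\hat f(n_1)\hat g(n_2)$, with resonance function $\Omega=n^3-n_1^3-n_2^3=3nn_1n_2$. The $(\rho,\gamma)$-irregularity bound \eqref{rho1} gives
\[
|\Phi^w_{t,r}(\Omega)|\lesssim \langle nn_1n_2\rangle^{-\rho}|t-r|^{\gamma}.
\]
So the key step is the bilinear estimate
\[
\Bigl\|\sum_{n_1+n_2=n}|n|\,\langle nn_1n_2\rangle^{-\rho}\,|\hat f(n_1)||\hat g(n_2)|\Bigr\|_{H^s}\lesssim\|f\|_{H^s}\|g\|_{H^s},
\]
where the zero modes are handled separately using conservation of the mean. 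I would prove it by Cauchy--Schwarz in the convolution sum, splitting into the high$\times$high$\to$low and high$\times$low regimes.

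In the high$\times$low regime with $|n_2|\ll|n_1|\sim|n|$, the multiplier is about $|n|^{1-2\rho}|n_2|^{-\rho-s}$. This is bounded once $\rho\ge\frac12$ and $-\rho-s\le0$, modulo a summability condition in $n_2$. The high$\times$high$\to$low regime, with $|n|\ll|n_1|\sim|n_2|=N$, is the constraining one: it produces the thresholds $s>\frac32-3\rho$ when $\rho\le\frac34$ and $s\ge-\rho$ when $\rho>\frac34$.

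With this estimate and $\gamma>\frac12$, the sewing lemma (Lemma~\ref{LEM:sew}) applies because $2\gamma>1$. The Young integral $\I^X(\uu)$ then exists, and it is locally Lipschitz in $\uu$ with a small constant $T^\gamma$. The contraction on a short interval follows, and so does Lipschitz dependence on the data.

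\textbf{Global theory.} For $s<0$, mass conservation is no longer available, so I would use the $I$-method. Let $I$ be the Fourier multiplier equal to $1$ for $|n|\le N$ and to $(|n|/N)^{s}$ above $N$. Then $\|Iu\|_{L^2}$ is conserved up to a trilinear increment. That increment is itself a Young integral against a trilinear driver whose symbol contains $1-\frac{m(n_1)m(n_2)}{m(n)}$, which gains a factor $N^{-\alpha}$. Iterating over intervals of the local existence time gives a global bound provided the accumulated increment $N^{-\alpha}\cdot(\text{number of steps})$ stays small.

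To make this work I would use the scaling of KdV: rescaling $u_\lambda(t,x)=\lambda^{-2}u(\lambda^{-3}t,\lambda^{-1}x)$ on $\lambda\T$ makes the data small in $H^s$ when $s>-\frac32$. The modulation is rescaled correspondingly, and its irregularity constants change by explicit powers of $\lambda$. Balancing $N$ and $\lambda$ against the $\gamma$ and $\rho$ exponents should yield (ii.a) and (ii.b). In case (ii.c), $\rho$ is large enough that only the scaling restriction $s>-\frac32$ remains.

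\textbf{Main obstacle.} I expect the hardest step to be the increment estimate, together with tracking how the irregularity norm $\|\Phi^w\|_{\mathcal W^{\rho,\gamma}}$ transforms under rescaling. That bookkeeping is exactly where the exponent $\frac{3-6\rho}{6-4\gamma}$ comes from. I would first do a careful Fourier case analysis of the trilinear symbol in the regimes where at least one frequency exceeds $N$.
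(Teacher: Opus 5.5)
Your outline is correct and is essentially the paper's argument. Part (i) is Proposition~\ref{PROP:main} applied with the bilinear driver bound of Lemma~\ref{LEM:tri1} at $\ld=1$. Part (ii) is the case $b=3$ of the scheme in Section~\ref{SEC:GWP}: the $L^2$ local theory for $Iu^\ld$ from Lemma~\ref{LEM:tri2} (which you use implicitly when iterating), the commutator bound of Proposition~\ref{PROP:com}, and the sewing remainder \eqref{R1}. The balancing you left as ``should yield'' does work out. The per-step gain is $K_3(\ld,N)$ from \eqref{com2}, which comes from the commutator symbol combined with the irregularity decay, not from the symbol alone. Requiring $\ld^3K_3(\ld,N)\ll1$ with $\ld\sim N^{-\frac{2s}{3+2s}}$ gives $s>\frac{3-6\rho}{6-4\g}$ for $\frac12<\rho<\frac32$, which together with \eqref{reg1} is exactly (ii.a)--(ii.b), and only $s>-\frac32$ for $\rho\ge\frac32$.
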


Theorem \ref{THM:old}\,(i)
exhibited a novel {\it regularization-by-noise} phenomenon;
given {\it any} $s \in \R$, 
the modulated KdV \eqref{kdv1} on $\T$
is locally well-posed
in $H^s(\T)$
by taking $\rho \gg1$ sufficiently large, 
whereas 
 the (unmodulated) KdV equation on $\T$:
\begin{equation}
\dt u+  \dx^3 u  =\dx u^2
\label{kdv2}
\end{equation}

\noi
is known to be ill-posed in $H^s(\T)$ for $s < -1$;
see
\cite{KPV96, CKSTT03, 
KT1, M12,  KV19}
for the known well-\,/\,ill-posedness
results for the (unmodulated) KdV \eqref{kdv2} on $\T$.
In the context of stochastic differential equations 
and stochastic parabolic PDEs, 
there  have been intensive research activities
on regularization by noise 
since the 70's;
see 
 \cite{FGP, GG0, G22, RT, CGLLO} and the  references therein.
See also   survey works \cite{Flan, Gess}.
Prior to the work~\cite{CGLLO}, 
however, 
regularization by noise 
in the context of 
dispersive PDEs
was known only 
for probabilistic well-posedness with  random initial data and\,/\,or additive noises of super-critical regularity
(such as \cite{BO96, BT08, CO, BOP2, Poc, OP, GKO2, DNY2, DNY3, OOT1, Bring, OOT2, BDNY});
see \cite[Remark~1.8]{CGLLO}.
See also 
Remark~\ref{REM:RN1}.

Theorem \ref{THM:old}\,(ii) on 
global well-posedness
of the modulated KdV 
was based on the so-called $I$-method (= method of almost conservation laws)
introduced in \cite{CKSTT03}, 
combined with the sewing lemma.
On the one hand, when the modulation is sufficiently irregular, 
the modulated KdV \eqref{kdv1} is globally well-posed
below $H^{-1}(\T)$, where the (unmodulated) KdV~\eqref{kdv2}
is ill-posed.
On the other hand, the regularity range in~\eqref{s1}
is restricted to the scaling subcritical regime
$s > s_\text{crit} = - \frac 32$, 
where $ s_\text{crit} = - \frac 32$ denotes the scaling critical 
regularity for the (unmodulated) KdV \eqref{kdv2}.
This restriction $s > - \frac 32$ in Theorem \ref{THM:old}\,(ii)
comes from the use of the classical KdV scaling (see \eqref{scaling0})
to the unknown $u$
in the application of the $I$-method.

Our goal in this paper is to go beyond the barrier of the scaling critical regularity 
 $s_\text{crit} = - \frac {3}{2}$ by introducing a new one-parameter family of scaling transforms
 for the pair $(u, w)$
 in \eqref{kdv1}
  (see \eqref{scaling1} and \eqref{scaling2}), 
which    allows 
us to prove
 that, given  
 {\it any} $s \in \mathbb R$,  the modulated KdV \eqref{kdv1} on~$\T$ with a sufficiently irregular modulation is globally well-posed in $H^s(\T)$;
see Theorem~\ref{THM:main}.
This significantly improves
Theorem \ref{THM:old}\,(ii), 
thus 
establishing a strong
regularization-by-noise
phenomenon
even for global well-posedness
of the modulated KdV on~$\T$.

\begin{remark}\label{REM:R1}\rm
In \cite{CGLLO}, 
the authors also studied well-posedness
of the modulated KdV~\eqref{kdv1} on the real line, 
where they proved
that, for a sufficiently irregular modulation, 
the modulated KdV \eqref{kdv1} on the real line
is 
locally well-posed
in $H^s(\R)$ for $s > - \frac 32$, 
going below $H^{-1}(\R)$, 
where 
the (unmodulated) KdV \eqref{kdv2}
is known to be ill-posed.
See \cite[Theorem~1.7]{CGLLO}.
We, however, point out that, 
in stark contrast to the periodic case (Theorem~\ref{THM:old}\,(i)),
 this local well-posedness
result on the real line 
was restricted to the 
scaling subcritical regime
$s > - \frac 32$
due to the `high $\times$ high into low' frequency interactions
(which do not exist in the periodic setting).
In fact, it was shown that the solution map
fails to be $C^2$ on $H^s(\R)$
for $s < - \frac 32$ in the spirit of \cite{BO97, Tzv}, 
stating that one can not use a contraction argument
to study 
local well-posedness of the modulated KdV \eqref{kdv1}
in 
$H^s(\R)$
for $s < - \frac 32$;
see \cite[Remarks 1.8 and 4.5]{CGLLO}.
It is for this reason that we restrict
our attention to the modulated KdV \eqref{kdv1}
on the circle
in discussing refined global well-posedness
in this paper.

\end{remark}

\begin{remark}\label{REM:BM1}\rm

We recall 
the following results from \cite{CG16, GG}, 
providing important examples of $(\rho, \g)$-irregular paths:

\smallskip

\begin{itemize}
\item
[{\rm (i)}]
Let $\{W_t\}_{t\in \R_+}$ be a fractional Brownian motion 
of Hurst parameter $H\in(0,1)$.
Then, 
for any $\rho < \frac{1}{2H}$,  
there exists $\frac 12 < \g < 1$
such that,  with probability one,  the sample paths of 
$W$ are $(\rho,\g)$-irregular on $\R_+$.

\medskip

\item 
[{\rm (ii)}]
Let  $d \ge 1$.
Given  any $\delta \in (0,1)$, a generic  $\dl$-H\"older continuous function $w \in C^\delta([0,1];\R^d)$ 
 is $(\rho,\g)$-irregular for any $\rho < \frac1{2\delta}$ with some $\gamma = \gamma(\rho) \in (\frac 12,1)$.

\end{itemize}

\smallskip

\noi
The term   ``generic''
in (ii)
 is to be understood according to the notion of {\it prevalence}; see~\cite{GG} for details and the  references therein.

As pointed out in \cite[Corollary 1.5 and Remark 1.6]{CGLLO}, 
we note that, 
Theorem \ref{THM:old}
and our main result (Theorem \ref{THM:main})
apply 
to the case, where the modulation $w$ is given by a fractional Brownian motion
or a generic $\dl$-H\"older continuous function.
For further studies on the notion of irregularity, 
we refer readers to 
 recent works \cite{GG, RT}.

\end{remark}

\begin{remark}\label{REM:RN1} \rm

In \cite{CGLLO}, 
the authors also studied other modulated dispersive PDEs
such as 
 the modulated Benjamin-Ono equation 
and the stochastic modulated KdV with an additive forcing
and established new regularization-by-noise phenomena.
 See \cite{CGLLO} for a further discussion on various examples. We also mention recent works \cite{Tanaka, Robert1, Robert2, Robert3, Robert4} on pathwise well-posedness of various modulated dispersive equations; see \cite[Remark 1.15]{CGLLO}.

In a recent preprint
\cite{CGLLO2} with A.\,Chapouto, G.\,Li, and J.\,Li, 
the second and fourth authors
investigated pathwise well-posedness of stochastic modulated dispersive equations with 
multiplicative noises.
By combining the nonlinear Young integration theory
(as in \cite{CGLLO} and this paper)
with a novel pathwise well-posedness 
approach for (unmodulated) dispersive PDEs
with multiplicative noises, 
developed in 
\cite{CLO2, CLO3, COZ, CLOO} (see  also \cite{OWZ, SLO}), 
they established a new regularization-by-noise phenomenon. For example, for the stochastic modulated KdV with a multiplicative fractional-in-time noise 
(with the Hurst parameter $\frac 12 < H < 1$), 
they proved
that, given any $s \in \R$ and $\s \in \R$ of the spatial regularity of the noise, 
it is locally well-posed in $H^s(\T)$, 
provided that the modulation is sufficiently irregular.

In a recent work \cite{GLLO}
 with  G.\,Li and J.\,Li, 
the second and fourth authors
 implemented a  normal form method to study modulated dispersive PDEs,
 establishing unconditional uniqueness of solutions.
This normal form
approach provides an alternative way to give a meaning to the 
integral term in~\eqref{mild3} (and 
$ \I^X(\uu)$ in \eqref{YDE0})
without relying on the sewing lemma 
and 
without assuming any positive temporal regularity
on the modulated interaction representation $\uu$, 
which 
can be viewed,  
at a philosophical level, 
as 
a controlled path approach
to nonlinear Young integration
which 
significantly improves any existing theory
in terms of the temporal regularity.
See \cite{GLLO}
for a further discussion.


\end{remark}

\subsection{Refined global well-posedness}
\label{SUBSEC:1.2}

In this subsection, we introduce a new one-parameter family
of scaling transforms for the modulated KdV \eqref{kdv1} on $\T$
and state a refined global well-posedness result (Theorem \ref{THM:main}).

In \cite{CGLLO}, 
the authors established
Theorem \ref{THM:old}\,(ii) 
on global well-posedness of the modulated KdV \eqref{kdv1} on $\T$
by applying the $I$-method together with the sewing lemma.
In this argument, the following scaling property
of the modulated KdV \eqref{kdv1}
plays a crucial role.
Given a modulation function~$w$, 
let $u$ be a solution to \eqref{kdv1} on $[0, T]\times \T$
(for some $T > 0$)
with initial data $u|_{t = 0} = u_0$.
Given $\ld \ge 1$, apply the classical KdV scaling to $u$:
\begin{align} 
    u^{\ld} (t, x) = \ld^{-2} u (\ld^{-3} t, \ld^{-1} x).
\label{scaling0} 
\end{align} 

\noi
Then,
$u^\ld$ is a solution to 
 the following scaled modulated KdV on $[0, \ld^3 T] \times \T_{\lambda}$, 
where   $\T_{\lambda} = \R/ (2 \pi \lambda \Z)$ denotes the dilated circle:
 \begin{align} 
\label{kdv4}
    \dt u^{\lambda} + \dx^3 u^{\lambda} \cdot \dt w^{\lambda} = \dx (u^{\lambda})^2
\end{align}

\noi
with the scaled initial data
\begin{align*}
u_0^\ld(x) = \ld^{-2} u_0(\ld^{-1}x), 
\end{align*}

\noi
where the scaled modulation $w^\ld$ is given by
\begin{align}
    w^{\ld} (t) = \ld^{3} w (\ld^{-3} t)
\label{scaling0a}
\end{align}

\noi
such that $\dt w^\ld(t) = \dt w(\ld^{-3} t)$.
See \cite[Lemma 7.3]{CGLLO}
for the equivalence of the unscaled and scaled modulated KdV equations.

We now introduce a new one-parameter  family of scaling transforms
for the modulated KdV \eqref{kdv1} on $\T$.
Given $b \in \R$ and $\ld \ge 1$,
define $u^\ld$ and $w^\ld$ by setting
\begin{align} 
\label{scaling1}
    u^{\lambda} (t, x) = \lambda^{- b + 1} u (\lambda^{-b} t, \lambda^{-1} x)
\end{align}

\noi
and 
\begin{align} 
\label{scaling2}
    w^{\lambda} (t) = \lambda^{3} w (\lambda^{-b} t)
\end{align} 

\noi
such that $\dt w^\ld(t) = \ld^{3-b} \dt w(\ld^{-b} t)$.
Then, we see that the following equivalence
holds; 
a pair $(u, w)$
satisfies  the modulated KdV \eqref{kdv1} on $[0, T] \times \T$ with
 initial data $u|_{t = 0} = u_0$ if and only if 
the scaled pair $(u^\ld, w^\ld)$
satisfies
 the  scaled modulated KdV 
\eqref{kdv4}
on $[0, \lambda^b T] \times \T_{\lambda}$
with the  scaled initial data 
\begin{align} 
\label{scaling4}
    u_0^{\lambda} (x) = \lambda^{-b + 1} u_0 (\lambda^{-1} x).
\end{align} 

\noi 
We note that, when  $b = 3$, 
\eqref{scaling1} and \eqref{scaling2}
reduce to the classical KdV scaling
 \eqref{scaling0} and~\eqref{scaling0a}, 
 respectively.

The main observation 
is that as compared to the (unmodulated) KdV \eqref{kdv2}, 
the modulated KdV 
\eqref{kdv1} enjoys one more degree of freedom
thanks to the presence of the modulation $w$, 
which we exploited in defining
the new one-parameter family of the scaling transforms~\eqref{scaling1}
and~\eqref{scaling2}.
By applying the $I$-method and the sewing lemma 
as in \cite[Section~7]{CGLLO}
with the new scaling transforms
\eqref{scaling1}
and~\eqref{scaling2}
for a  suitable choice of $b > 0$
(see~\eqref{scaling5}), 
we establish the following refined global well-posedness
of the modulated KdV \eqref{kdv1} on $\T$.

\begin{theorem} 
\label{THM:main}
Given $\rho > \frac 12$ and  $\frac 12 < \gamma < 1$, let $w$ be $(\rho, \gamma)$-irregular on $\R_{+}$. Suppose that  $s\in \R$ satisfies one of the following conditions\textup{:}
\begin{align}
\begin{split}
\textup{(i)} & \ \  \tfrac{1}{2} < \rho \le \tfrac{3\g}{6\g-2} \quad \text{and} \quad s > \big(\tfrac{3}{2}-3\rho \big) \g, \\ 
\textup{(ii)} & \ \   \tfrac{3\g}{6\g-2}<\rho \leq \tfrac {3}{2}\quad \text{and} \quad s \ge -\rho,  \\ 
\textup{(iii)} & \ \ \tfrac{1}{2}<\g<\tfrac{\sqrt{5}-1}{2},\\
&\ \textup{(iii.a)} \  \ \tfrac{3}{2}\le\rho<\tfrac{3(1-\g)^2}{2(-\g^2-\g+1)}, \quad \text{and} \quad s \ge -\rho,  \\  
& \text{or} \\ 
&\ \textup{(iii.b)} \ \ \rho \geq \tfrac{3(1-\g)^2}{2(-\g^2-\g+1)}, \quad \text{and} \quad s > -\tfrac {4\rho(2\g-1)+3(1-\g)^2}{2(-\g^2+3\g-1)}, \\
\textup{(iv)} & \ \ \tfrac {\sqrt{5} - 1}{2}\le  \gamma < 1,\quad \rho\ge \tfrac{3}{2},\quad \text{and}\quad s\ge -\rho. 
\end{split} 
\label{mcon1}
\end{align}
\noi 
Then, the modulated KdV equation \eqref{kdv1} on $\T$ is globally well-posed in $H^s (\T)$. 
\end{theorem}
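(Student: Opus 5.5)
We follow the $I$-method scheme of \cite[Section~7]{CGLLO}, replacing the classical KdV scaling \eqref{scaling0}--\eqref{scaling0a} by the one-parameter family \eqref{scaling1}--\eqref{scaling2}.

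\emph{Step 1: irregularity of the scaled modulation.} We first track how the irregularity norm changes under the scaling. Since
\[
\Phi^{w^\ld}_{t,r}(a)=\int_r^t e^{ia\ld^3 w(\ld^{-b}t')}dt' = \ld^{b}\,\Phi^w_{\ld^{-b}t,\ld^{-b}r}(\ld^3 a),
\]
we obtain for $a$ on the dual lattice $\ld^{-1}\Z$ (so that $|\ld^3 a|\ge \ld^2$ for $a\neq0$)
\[
\|\Phi^{w^\ld}\|_{\mathcal W^{\rho,\g}_{\ld^bT}} \les \ld^{b(1-\g)}\sup_{a}\frac{\jb{a}^{\rho}}{\jb{\ld^3 a}^{\rho}}\,\|\Phi^{w}\|_{\mathcal W^{\rho,\g}_{T}}\les \ld^{b(1-\g)-3\rho}\cdots .
\]
Frequencies with $|a|<1$ require separate bookkeeping, and this is exactly where the gain over $b=3$ arises. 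We then also record the scaled data bound $\|I u_0^\ld\|_{L^2(\T_\ld)}\les \ld^{-b+\frac32-s}N^{-s}\|u_0\|_{H^s}$ for $s<0$.

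\emph{Step 2: local theory on $\T_\ld$.} We redo the trilinear (bilinear driver) estimate of Lemma~\ref{LEM:tri1} on $\T_\ld$ for the operator $I$ at scale $N$, keeping explicit track of the dependence on $\ld$ and $N$ coming from the irregularity constant of Step~1. From this we obtain a local existence time $\dl$ for $Iu^\ld$ in $L^2$ (the sewing lemma applied with $\g>\frac12$), depending polynomially on the irregularity constant and on $\|Iu^\ld_0\|_{L^2}$.

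\emph{Step 3: almost conservation and optimisation.} The modified energy $E(Iu^\ld)=\|Iu^\ld\|_{L^2}^2$ has increment given by a nonlinear Young integral of the commutator $\dx\big((Iu)^2-I(u^2)\big)$ against $Iu$. We estimate this via the sewing lemma, gaining a factor $N^{-\alpha}$ times a power of the irregularity constant, with $\alpha$ determined by $\rho$ and $\g$. Iterating over $\sim \ld^bT/\dl$ local intervals requires
\[
(\ld^bT/\dl)\,N^{-\alpha}\,\ld^{(\cdots)}\ll1
\]
while keeping $\|Iu_0^\ld\|_{L^2}\ll 1$. We then choose $\ld=N^{\theta}$ and $b$ as functions of $(\rho,\g,s)$, as in \eqref{scaling5}, and solve the resulting linear inequalities in the exponents. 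The feasible regions split along the thresholds $\rho=\frac{3\g}{6\g-2}$, $\rho=\frac32$ and $\g^2+\g-1=0$, producing conditions (i)--(iv) of \eqref{mcon1}. In the range $s\ge-\rho$ the local theory alone, applied with a suitable $b$, should suffice.

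\emph{Main obstacle.} The hardest step is Step~3: the commutator estimate on $\T_\ld$ with explicit $(\ld,N,b)$-dependence, together with the subsequent optimisation. In particular, the low-frequency region $|a|\lesssim \ld^{-3}$ in Step~1 is where irregularity gives no decay, and it must be controlled by the $\ld^{-1}$ lattice spacing. I would first try the choice $b<3$ that balances the loss factor $\ld^{b(1-\g)}$ against $\ld^{-3\rho}$, then check the exponent constraints case by case.
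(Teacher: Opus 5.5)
Your outline has the right architecture (a free scaling exponent $b$, local theory for $Iu^\ld$ in $L^2(\T_\ld)$ via the sewing lemma, a commutator estimate, iteration). However, the mechanism you single out is not the one that drives the result, and the step that actually produces conditions (i)--(iv) is missing.

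The entire gain over Theorem~\ref{THM:old} comes from \eqref{scaling3}, $\|u_0^\ld\|_{\dot H^s(\T_\ld)}=\ld^{-b+\frac32-s}\|u_0\|_{\dot H^s(\T)}$. This forces $b>\frac32-s$, so $b>3$ as soon as $s<-\frac32$, and $b\gg1$ when $|s|\gg1$. Your plan to ``first try $b<3$'' is incompatible with \eqref{scaling5} precisely in the new regime. For $s<-\frac32$ the scaled data would grow as $\ld\to\infty$, and the iteration could not even start.

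Nor is there a gain hidden in a low-frequency part of the irregularity norm. The phase is $\Xi_{\KDV}(\bar n)=-3nn_1n_2$ with $n_j\in\Z^*_\ld$. Hence $\ld^3\Xi_{\KDV}(\bar n)=\Xi_{\KDV}(\ld\bar n)\in 3\Z\setminus\{0\}$, not $|\ld^3a|\ge\ld^2$. Consequently $\Phi^{w^\ld}_{t,r}(\Xi_{\KDV}(\bar n))=\ld^b\Phi^w_{\ld^{-b}t,\ld^{-b}r}(\Xi_{\KDV}(\ld\bar n))$ is just the unscaled irregularity at integer frequencies times the time-dilation loss $\ld^{b(1-\g)}$. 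Your bound $\ld^{b(1-\g)-3\rho}$ on the full $\mathcal W^{\rho,\g}$-norm of $w^\ld$ does not follow, since for $|a|\sim\ld^{-3}$ the ratio $\jb{a}^\rho/\jb{\ld^3a}^\rho$ is of order one. It is also not needed. The paper keeps the pointwise factor $|nn_1n_2|^{-\rho}$ in \eqref{MF2}, and after rescaling the frequency sum the $\ld^{-3\rho}$ cancels exactly. What remains are the $\rho$-independent factors $\ld^{-\frac32+b(1-\g)+s}$ in \eqref{bd1} and $\ld^{-\frac32+b(1-\g)}$ in \eqref{MIF2}.

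That loss is the real obstacle your sketch does not register. For $b>\frac{3}{2(1-\g)}$ the driver norm of $Y^\ld$ grows with $\ld$. The local time therefore shrinks, $\tau\sim\ld^{(\frac32-b(1-\g))/(\g-\al)}$ as in \eqref{A2} (with $\al$ the H\"older exponent), and more iteration steps are needed. One has to take $\al=1-\g+\eps$ and verify \eqref{CON1}, i.e.\ $\ld^{b-(1-\g)(\frac32-b(1-\g))/(\g-\al)}K_b(\ld,N)\ll1$, under the relation $\ld\sim N^{-2s/(2b+2s-3)}$ from \eqref{IT2}.

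Theorem~\ref{THM:main} is exactly the outcome of this computation. The condition becomes $2Ab>B$ (resp.\ $2\wt A b>\wt B$) with $A,B$ affine in $s$. Splitting by the sign of $A$ and imposing $b>b_*(s,\g)$ gives $s>\max(-2\rho\g,G)$ for $\rho>\frac32$ and $s>(\frac32-3\rho)\g$ for $\rho<\frac32$. The thresholds $\rho=\frac{3\g}{6\g-2}$ and $\g^2+\g-1=0$ appear only when these are compared with $-\rho$ from \eqref{reg1}. Your proposal merely asserts that the feasible region splits along these thresholds, so as written it does not prove the statement.

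Finally, the remark that for $s\ge-\rho$ ``the local theory alone should suffice'' is wrong. The condition $s\ge-\rho$ (and $s>\frac32-3\rho$ for $\rho\le\frac34$) is the hypothesis \eqref{reg1} under which Lemmas~\ref{LEM:tri1}, \ref{LEM:tri2} and Proposition~\ref{PROP:com} hold at all. In cases (ii), (iii.a), (iv) it is the binding constraint only because the $I$-method condition is weaker there. The almost conservation argument is still what gives global existence.
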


We note that, when $\tfrac{1}{2}<\g<\tfrac{\sqrt{5}-1}{2}$,  
\[-\g^2+3\g-1 > -\g^2-\g+1 > 0.\]

From $(\text{iii.b})$ and $(\text{iv})$ in \eqref{mcon1}, 
we observe a very strong regularization effect: given \textit{any} $s \in \R$, 
the modulated KdV~\eqref{kdv1} on $\T$ with a sufficiently irregular modulation 
$w$ (that is, $\rho$ is sufficiently large) is globally well-posed in $H^s (\T)$. 
This is in sharp contrast to Theorem \ref{THM:old}\,(ii), where global well-posedness holds only 
in the scaling subcritical regime $s > -\frac{3}{2}$, regardless of how large $\rho$ is. 
We also note that Theorem \ref{THM:main} improves Theorem \ref{THM:old}\,(ii) in all cases.


We present  a proof of Theorem~\ref{THM:main}
in Section \ref{SEC:GWP}.
For
 $s \ge 0$, global well-posedness of the modulated KdV \eqref{kdv1}
follows from  
  the conservation of the $L^2$-norm under  \eqref{kdv1}
  (see \cite[Proposition 6.1]{CGLLO})
and the persistence of regularity for  \eqref{kdv1}
(see \cite[Remark 1.4\,(iii)]{CGLLO}).
Hence, we restrict our attention to   
 the case $s < 0$
in the following.
In view of the invariance of 
the modulated KdV \eqref{kdv1}
under the following Galilean transform:
$u(t, x)\mapsto u (t, x - 2\al_0t)-\al_0$
along with the conservation of the spatial mean (which can be verified
by arguing as in \cite[Section 6]{CGLLO}), 
we assume that 
\begin{align}
\begin{split}
&  \text{a solution $u(t)$ to the modulated KdV \eqref{kdv1} has spatial mean~$0$}\\
& \hspace{2.3cm} \text{for any $t \in \R$ as long as it exists}
\end{split}
\label{mean0}
\end{align}

\noi
in the 
remaining part of this paper. 
Lastly, 
for simplicity of the presentation, 
we only consider positive times in the following.

\begin{remark}\rm
(i)
In applying the $I$-method, 
we first apply the scaling~\eqref{scaling1}, 
which leads to the condition $b > \frac 32- s$; see \eqref{scaling3} and \eqref{scaling5}.
In particular, this implies that, given $s < 0$ with $|s| \gg 1$, 
we need to take $b \gg 1$.
As such, 
we restrict our attention to the case
$ b > \frac {3}{2 (1 - \g)}$
for conciseness of the presentation, 
since the case 
$ b \le \frac {3}{2 (1 - \g)}$ requires a slightly different consideration, 
while the latter can not handle the case $b \gg 1$ (corresponding to $|s| \gg1$, 
which is the case we are after in this paper).
See 
Remark \ref{REM:tau}
for a further explanation.

\smallskip

\noi
(ii) 
In view of 
\eqref{scaling3}, 
it seems that 
the new scaling transforms \eqref{scaling1} and \eqref{scaling2}
make 
the well-posedness problem of the modulated KdV \eqref{kdv1}
in $H^s(\T)$  (and in $H^s(\R)$)
scaling subcritical by choosing $b > \frac 32 - s$; see \eqref{scaling5}.
Indeed, these new scaling transforms
allowed us to significantly improve
global well-posedness regularity thresholds
in Theorem \ref{THM:main}.
However, they do not seem to allow us to 
improve local well-posedness
of the modulated KdV \eqref{kdv1}
on the real line
from \cite{CGLLO}, which was
restricted
to the scaling subcritical regime $s > - \frac 32$.
See Remark \ref{REM:R1}.

\end{remark}

\begin{remark}\rm

In a recent preprint \cite{AGT},
T.\,Arthur, K.\,Tsugawa, 
and the first author 
studied the well-posedness issue
of the following coupled system of modulated KdV
on $\T$:
\begin{align} 
\begin{cases}
\dt u + \dx^3 u \cdot \dt w = \frac 12 \dx v^2 \\
\dt v + \al \dx^3 v \cdot \dt w = \dx (uv) 
\end{cases} 
\label{kdv3}
\end{align}

\noi
for $0 < \al \le 4$.

In the unmodulated case (i.e.~$w(t)  \equiv t$), 
the fourth author 
\cite{Oh1, Oh2, Oh3}
studied the well-posedness
issue of the coupled KdV system \eqref{kdv3}
and proved that, when $\al \ne 1$, 
the threshold regularities for local and global well-posedness
depend sensitively on the Diophantine 
properties
of certain constants derived from the coupling parameter $\al$.
We note that these regularity thresholds in \cite{Oh1, Oh2} 
were in general much worse (more than one regularity) than 
the KdV case (and the $\al = 1$ case).

In the modulated setting, 
by combining the $I$-method and the sewing lemma, 
utilizing  the 
 one-parameter  family of scaling transforms
\eqref{scaling1} and \eqref{scaling2}, 
with the Diophantine
properties
of certain constants, depending on $\al$ as in \cite{Oh1, Oh2},
the authors 
in \cite{AGT}
 proved that,
for {\it almost every} $\al \in (0, 4]$
and given any $s \in \R$, 
the system 
 \eqref{kdv3}
is globally well-posed in $H^s(\T) \times H^s(\T)$, 
provided that the modulation $w$ is sufficiently irregular.
See \cite{AGT} for a further discussion.

\end{remark}

\section{Notations and preliminary materials}

In this section, we recall some notations and preliminary results. 
We first introduce basic notations
and function spaces.
We then provide a brief review on theory of nonlinear Young integration
and local well-posedness of a nonlinear YDE.

\subsection{Basic notations}
Let $A\les B$ denote an estimate of the form $A\leq CB$ for some constant $C>0$. We write $A\sim B$ if $A\les B$ and $B\les A$, while $A\ll B$ denotes $A\leq c B$ for some small constant $c> 0$. 
We may write  $\ll_{\al}$ and $\sim_{\al}$ to 
emphasize the dependence on an external parameter $\al$.
We use $C>0$ to denote various constants, which may vary line by line.

In expressing the dependence of a function $u$
on the time variable, we often use the short-hand notation
$u_t = u(t)$,  which is standard in probability theory and stochastic analysis.

We often drop
 dependence on certain parameters such as $\rho$ and $\g$ (and  on $s$ and $b$).

%

\subsection{Function spaces}

Given  $\ld \ge 1$, 
set
 $\Z_{\ld} = \Z/\ld$
and 
$\Z^*_{\ld} = \Z_\ld\setminus\{0\}$. 
When $\ld = 1$, we simply set 
$\Z^* = \Z^*_{\ld = 1}$.
We define 
 the Fourier transform of a function $f$ on $\T_\ld$
by 
$$
\F_{\T^{\ld}} (f) (n) = \ft  f(n)=\int_{\T_\ld}f(x)e^{-i n x} \frac{d x}{2\pi}
$$

\noi
for $n\in\Z_{\ld}$.
Then, we have 
\begin{align}
\begin{split}
f(x)
& =
\frac{1}{\ld} \sum_{ n \in\Z_{\ld}}\ft  f(n)e^{i  nx}, \\
\int_{\T_\ld}|f(x)|^2 \frac{d x}{2\pi}
& =\frac{1}{\ld}\sum_{n\in\Z_{\ld}}|\ft  f(n)|^2,\\
\ft{fg}(n)
& =\frac{1}{\ld}
 \sum_{ \substack{n_1, n_2 \in \Z_\ld\\n = n_1+n_2}}
 \ft  f(n_1)\ft  g(n_2).
\end{split}
\label{FT3}
\end{align}

\noi
Given $s \in \R$, 
we define the non-homogeneous and homogeneous Sobolev spaces $H^{s}(\T_\ld)$ 
and $\dot H^{s}(\T_\ld)$ 
via the (semi-)norms: 
\begin{align}
\begin{split}
\|  f  \|^2_{H^{s}(\T_\ld)}
& =\frac{1}{\ld}
\sum_{n \in \Z_{\ld}}
\jb{n}^{2s} | \ft f(n) |^2, \\
\|  f  \|^2_{\dot H^{s}(\T_\ld)}
& =\frac{1}{\ld}
\sum_{n \in \Z_{\ld}^*}
|n|^{2s} | \ft f(n) |^2.
\end{split}
\label{FT4}
\end{align}

\medskip

Given $k \in \N$, 
let $V, V_1, \dots, V_k$  be separable Hilbert spaces.
We use 
\begin{align*}
\cL_k\Big(\bigotimes_{j = 1}^k V_j; V\Big)
\end{align*}

\noi
 to denote 
the Banach space of bounded $k$-linear operators 
on $\bigotimes_{j = 1}^k V_j$ 
(equipped with the Hilbert tensor norm) 
with values in $V$.
When $V_j =  V$ for $j =1, \dots,k$,
we simply set  
 $\cL_k(V)=\cL_k(V^{\otimes k}; V)$.

Let $V$ be a Banach space and $T>0$.
For $n\in\N$, we denote 
\begin{align*}
\Delta_{n, T} = 
\big\{ (t_1, \ldots, t_n) \in [0,T]^n: \ t_i > t_j 
\text{ for } i < j\big\}.
\end{align*}
We denote by $C_{n,T}V$ 
the space of continuous functions 
from $\Delta_{n,T}$ to $V$. When $n=1$,
we may write $C_T V$ for simplicity, 
and equip this space with the supremum norm: 
\begin{align*}
\|f\|_{C_T V} = \|f\|_{L^\infty_T V} = \sup_{0\leq t \leq T} \|f(t)\|_V.
\end{align*}

\noi
We define the coboundary operator 
$\updl: C_{n,T} V  \to C_{n+1,T} V$ 
as follows; 
given  $f\in C_{n,T} V$ 
and $(t_1,\ldots, t_{n+1}) \in \Dl_{{n+1},T}$, 
we set
\begin{align*}
(\updl f)_{t_1,\ldots , t_{n+1}} = 
\sum_{k=1}^{n+1} (-1)^{k} f_{t_1, \ldots, t_{k-1}, t_{k+1}, \ldots, t_{n+1}}.
\end{align*}

\noi
For example, for $f\in C_T V$
and 
$g\in C_{2,T} V$, 
we have 
\begin{align*}
\begin{split}
(\updl f )_{t,r} &= f_t - f_r, \\
(\updl g)_{t_1,t_2,t_3} &= 
g_{t_1,t_3} - g_{t_1,t_2} - g_{t_2,t_3}
\end{split}
\end{align*}

\noi
for $(t,r)\in\Dl_{2,T}$
and $(t_1,t_2,t_3) \in \Dl_{3,T}$.
As noted in \cite{GT10}, 
the sequence
\begin{align*}
0 \too V \too C_{1, T}V \stackrel{\updl}{\too} C_{2, T}V
\stackrel{\updl}{\too} C_{3, T}V
\stackrel{\updl}{\too} \cdots
\end{align*}

\noi
is exact. 
In particular, we have 
 $\updl\circ\updl =0$ and 
if $f \in C_{n,T} V$ with $\updl f =0$, 
then there exists a $g\in C_{n-1,T}V$ 
such that $f= \updl g$; 
see, for example,  \cite[Lemma 2.1]{GT10}.

Given $0 < \g < 1$, we denote by $C^\g_T V = C^\g([0, T]; V)$ the space of $\g$-H\"older continuous functions taking values in $V$, endowed  with the seminorm:
\begin{align*}
\|f\|_{C^\g_T V} = \sup_{(t,r)\in \Dl_{2,T}} 
\frac{\|(\updl f)_{t,r}\|_V}{|t-r|^\g}.
\end{align*}

\noi
We also define 
 $\CC^\g_T V = \CC^\g([0, T]; V)$ via the norm:
\begin{align*}
\| f \|_{\CC^\g_TV} = \| f \|_{L^\infty_TV} + \|f \|_{C^\g_T V}.
\end{align*}

\noi 
We also introduce the spaces 
$C^\g_{n,T}V$, $n=2,3$, 
equipped with the following H\"older-type norms;
 for $g\in C_{2,T}V$ and $h\in C_{3,T}V$, we set
\begin{align}
\begin{split}
\| g\|_{C^\g_{2,T} V} & 
= \sup_{(t,r) \in \Dl_{2,T}} \frac{\|g_{t,r} \|_{V} }{|t-r|^{\g}}, \\
\| h\|_{C^\g_{3,T} V} & 
= \inf_{0<\al<\g} 
\sup_{(t_1,t_2,t_3) \in \Dl_{3,T}} 
\frac{ \|h_{t_1,t_2,t_3}\|_V}
{|t_1-t_2|^{\al} |t_2-t_3|^{\g-\al}}.
\end{split}
\label{Ho2}
\end{align}

\medskip

Let $V$ and $W$ be Banach spaces.
Given $k \in \N$, 
we  use $\Lip_k(V; W)$ 
to denote the Banach space of 
locally Lipschitz maps $f:V\to W$
with polynomial growth of order $k$
such that 
\begin{align}
\|f\|_{\Lip_k(V; W)} = 
\sup_{x,y\in V} 
\frac{\|f(x)-f(y)\|_W}{\|x-y\|_V \big(1+\|x\|_V+\|y\|_V\big)^{k-1}}
<\infty .
\label{Lip1}
\end{align}

\noi
When $V = W$, 
we simply set $\Lip_k(V)= \Lip_k(V;V)$.
Given an integer $k \ge 2$, 
we say that $f\in\Lip^2_k(V; W)$ 
if 
\smallskip

\begin{itemize}
\item[(i)] $f\in\Lip_k(V; W)$,

\smallskip
\item[(ii)]
 $f$ is Fr\'echet differentiable 
with 
$Df \in \Lip_{k-1}(V;\cL_1(V; W))$.

\end{itemize}

\smallskip

\noi
From \eqref{Ho2} and \eqref{Lip1}, we have 
\begin{align}
\|f \|_{C^\g_{2, T}\Lip_k(V;W)}
= \sup_{(t,r) \in \Dl_{2,T}}
\frac 1{|t-r|^{\g}}
\sup_{x,y\in V} 
\frac{\| f_{t, r}(x)- f_{t, r}(y)\|_W}{\|x-y\|_V \big(1+\|x\|_V+\|y\|_V\big)^{k-1}}.
\label{Ho3}
\end{align}

\medskip

Given $s \in \R$, $0 < \g < 1$, $k \in \N$,  $T > 0$, and $\ld \ge 1$, 
we define the space 
$\cX^{s, \g}_k([0, T]\times\T_\ld)$ of drivers 
as follows; for $k \ge 2$, we set
\begin{align}
\begin{split}
& \cX^{s, \g}_k([0, T]\times \T_\ld) \\
& \quad = 
\big\{X \in C^\g_{2, T} \Lip_k^2(H^s(\T_\ld)):
X(0) = DX[0]= 0 \text{ and }\updl X = 0\big\}, 
\end{split}
\label{X1}
\end{align}

\noi
endowed with the norm:
\begin{align*}
\|X\|_{\cX^{s, \g}_k([0, T]\times \T_\ld)} 
=  \|X\|_{C^\g_{2, T}\Lip_k(H^s(\T_\ld))} + \|DX\|_{C^\g_{2, T}\Lip_{k-1}(H^s(\T_\ld);\cL_1(H^s(\T_\ld)))}.
\end{align*}

\noi
When $k = 1$, we set
\begin{align*}
 \cX^{s, \g}_1 ([0, T]\times \T_\ld) 
& =  \big\{ X \in 
C^\g_{2, T} \L_1(H^s(\T_\ld)):
\updl X = 0\big\}.
\end{align*}

\noi
In \eqref{X1}, $X(0) = 0$ (and $DX[0]= 0$) means $X_{t, r}(0) = 0$ 
(and $DX_{t, r}[0]= 0$, respectively)
for any $(t, r) \in \Dl_{2, T}$, 
where $DX_{t, r}[0]$
denotes the Fr\'echet derivative of $X_{t, r}$ at $u = 0 \in H^s(\T_\ld)$.

We also  define 
$\dot \cX^{s, \g}_k([0, T]\times\T_\ld)$ by 
replacing $H^s(\T_\ld)$ 
with $\dot H^s(\T_\ld)$
in the definition of 
$ \cX^{s, \g}_k([0, T]\times\T_\ld)$.
In particular, for $k \ge 2$, we have 
\begin{align} 
\begin{split}
& \dot \cX^{s, \g}_k ([0, T]\times \T_\ld) \\
& \quad = \big\{X \in C^\g_{2, T} \Lip_k^2(\dot H^s(\T_\ld)):X(0) = DX[0]= 0 \text{ and }\updl X = 0\big\}, 
\end{split}
\label{X1a}
\end{align}

\noi 
endowed with the norm:
\begin{align*}
\|X\|_{\dot \cX^{s, \g}_k([0, T]\times \T_\ld)} 
=  \|X\|_{C^\g_{2, T}\Lip_k(\dot H^s(\T_\ld))} + \|DX\|_{C^\g_{2, T}
\Lip_{k-1}(\dot H^s(\T_\ld);\cL_1(\dot H^s(\T_\ld)))}. 
\end{align*}

We may use short-hand notations such as
$\CC^\al_T H^s_x  = \CC^\al\big([0, T]; H^s(\T_\ld))$, etc.
 when there is no ambiguity.

\subsection{Nonlinear Young differential equation} 
\label{SUBSEC:YDE}

In this subsection, 
after stating the sewing lemma, 
we first provide a brief review on the construction  of nonlinear Young integrals
via the sewing lemma.
We then state a local well-posedness result
for a general nonlinear YDE (Proposition \ref{PROP:main}).

In \cite{Gub04}, the second author introduced 
the sewing lemma which plays a fundamental role
in (nonlinear) Young\,/\,rough integration theory;
see \cite[Chapter 4]{FH20} for a further discussion.
 The following version of the sewing lemma 
 is taken from~\cite{GT10} 
with slight modifications; 
see  
\cite[Proposition~2.3, Corollaries  2.4 and  2.5]{GT10}.
See also \cite[Lemma~4.2]{FH20}.

For $n\in \N$, we set $C^{1+}_{n,T}V = \bigcup_{\g>1}C^\g_{n,T}V$.

\begin{lemma}
\label{LEM:sew}

Let $V$ be a Banach space and 
fix $T>0$. 
Then,  there exists a unique linear map \textup{(}called the sewing map\textup{)}
$\Lambda:C^{1+}_{3,T} V \cap 
\Ker \updl|_{C_{3,T}V}
\to C^{1+}_{2,T}V$ such that 

\smallskip
\begin{enumerate}
\item[(i)] 
We have 
$\updl \Lambda h = h$
for each  $h\in C_{3,T}V\cap \Ker  \updl|_{C_{3,T} V}$.

\smallskip

\item[(ii)]
 For each $\z >1$,
the sewing map $\Lambda$ is continuous
from $C^\z _{3,T}V\cap 
\Ker  \updl|_{C_{3,T} V}$ to 
$C^\z _{2,T}V$ such that 
\begin{align*}
\|\Lambda h \|_{C^\z _{2,T}V} 
\le \frac{1}{2^\z - 2}  \| h \|_{C^\z _{3,T}V} 
\end{align*}

\noi
for any $h\in C^\z _{3,T}V$.

\smallskip
\item[(iii)] 
Given any  $g\in C_{2,T}V$ 
with 
$\updl g\in C^\z _{3,T}V$, 
there exists a unique
$f\in C([0, T];V)$  \textup{(}modulo an additive  constant\textup{)} 
such that 
$\updl f = (\Id - \Lambda \updl)g$. 
In addition, 
we have 
\begin{align*}
(\updl f)_{t,r} = \lim_{|\Pi([r,t])|\to 0} 
\sum_{j=0}^{n-1} g_{t_j,t_{j+1}}
\end{align*}

\noi
 for any $(t,r)\in \Dl_{2,T}$,
 where 
 the limit is over any partition
 $\Pi([r,t])$  
 of  $[r,t]$\textup{:} 
\[\Pi ([r,t]) = \{r = t_n < \dots < t_1 <  t_0 = t\}\]
whose mesh size 
$|\Pi([r,t])| = \sup_{j} |t_j-t_{j+1}|$ 
tends to $0$.

\end{enumerate}

\end{lemma}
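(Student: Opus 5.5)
The statement to be proved is the sewing lemma (Lemma \ref{LEM:sew}). I would follow the classical dyadic construction of \cite{Gub04, GT10}.

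\textbf{Uniqueness.} Fix $\z>1$ and $h\in C^\z_{3,T}V\cap\Ker\updl$. Suppose $\Lambda_1 h$ and $\Lambda_2 h$ both lie in $C^{1+}_{2,T}V$ and satisfy $\updl\Lambda_i h=h$. Then $\updl(\Lambda_1h-\Lambda_2h)=0$. By exactness of the coboundary complex, $\Lambda_1h-\Lambda_2h=\updl q$ for some $q\in C_{1,T}V$. Since $\updl q\in C^{1+}_{2,T}V$, the path $q$ is H\"older of order $>1$, hence constant. So $\updl q=0$, which gives uniqueness.

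\textbf{Existence.} Since $\updl h=0$, exactness gives some $B\in C_{2,T}V$ with $\updl B=h$; $B$ is not yet regular. I would then proceed in four steps.
\begin{itemize}
\item[1.] Fix $(t,r)\in\Dl_{2,T}$ and let $\Pi_n$ be the dyadic partition of $[r,t]$ into $2^n$ equal pieces. Define $B^n_{t,r}=\sum_{[u,v]\in\Pi_n}B_{v,u}$.
\item[2.] Passing from $\Pi_n$ to $\Pi_{n+1}$ inserts a midpoint $m$ into each interval $[u,v]$. Each such insertion changes the sum by $\pm h_{v,m,u}$, whose size is at most $\|h\|_{C^\z_3}\,(|v-u|/2)^{\z}$ (using the infimum over $\al$ in \eqref{Ho2}).
\item[3.] Summing over the $2^n$ intervals gives
\[
\|B^{n+1}_{t,r}-B^n_{t,r}\|\le \|h\|_{C^\z_3}\,|t-r|^\z\, 2^{n(1-\z)}2^{-\z}.
\]
This is geometric in $n$, so $B^n_{t,r}\to B^\infty_{t,r}$.
\item[4.] Set $\Lambda h=B-B^\infty$. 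Summing the geometric series gives the bound $\sum_n 2^{-\z}2^{n(1-\z)}=1/(2^\z-2)$, which is the constant in (ii).
\end{itemize}

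\textbf{The main obstacle} is showing that $B^\infty$ is additive, i.e.\ $\updl B^\infty=0$; this is what makes $\updl\Lambda h=h$. The difficulty is that dyadic partitions of $[r,s]$, $[s,t]$ and $[r,t]$ are incompatible. I would handle this by proving partition independence. For an arbitrary partition $\Pi$ of $[r,t]$, remove points one at a time, each time choosing the point whose two neighbours are closest (the Young argument). Such a point has neighbour gap at most $2|t-r|/(N-1)$ when $\Pi$ has $N$ intervals. Removing it costs $\|h\|\,(2|t-r|/(N-1))^\z$, and summing over $N$ converges since $\z>1$.

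From this argument I would derive two consequences.
\begin{itemize}
\item[(a)] The limit of Riemann-type sums $\sum_{\Pi}B_{t_j,t_{j+1}}$ exists as $|\Pi|\to0$ and equals $B^\infty$.
\item[(b)] The estimate $\|B_{t,r}-\sum_\Pi B\|\les\|h\||t-r|^\z$ holds uniformly in $\Pi$.
\end{itemize}
By (a), concatenating partitions of $[r,s]$ and $[s,t]$ yields $B^\infty_{t,r}=B^\infty_{t,s}+B^\infty_{s,r}$. Linearity of $\Lambda$ follows from uniqueness, and continuity in (ii) from the bound in Step 4 (at worst with a constant, whose sharp value comes from the dyadic computation).

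\textbf{Part (iii).} Given $g$ with $\updl g\in C^\z_3$, the two-parameter function $(\Id-\Lambda\updl)g$ has vanishing coboundary by (i). Exactness then produces $f$ with $\updl f=(\Id-\Lambda\updl)g$, unique modulo constants. For the Riemann sum formula, take any partition $\Pi$ of $[r,t]$. Then
\[
\Big\|\sum_\Pi g_{t_j,t_{j+1}}-(\updl f)_{t,r}\Big\|=\Big\|\sum_\Pi(\Lambda\updl g)_{t_j,t_{j+1}}\Big\|\les\sum_j|t_j-t_{j+1}|^\z\le|\Pi|^{\z-1}|t-r|,
\]
which tends to $0$ as $|\Pi|\to0$.
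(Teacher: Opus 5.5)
Your proposal is correct and is essentially the standard argument of \cite{Gub04, GT10} (see also \cite[Lemma~4.2]{FH20}); the paper itself gives no proof and simply cites these references. In that argument, the dyadic sums give existence with the sharp constant $\frac{1}{2^\z-2}$, Young's point-removal estimate gives partition independence and hence $\updl \Lambda h = h$, and uniqueness and (iii) follow from exactness of $\updl$ together with the fact that a path whose increments lie in $C^{1+}_{2,T}V$ is constant. The one step you leave implicit is deducing (a) from (b): apply (b) on each interval of a partition $\Pi$ and compare $\sum_{\Pi} B$ with the sum over the common refinement $\Pi\cup\Pi'$, which gives $\big\|\sum_{\Pi}B-\sum_{\Pi'}B\big\|\les \|h\|\,(|\Pi|^{\z-1}+|\Pi'|^{\z-1})\,|t-r|$.
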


Let us now provide a brief review
on the construction of a nonlinear Young integral 
associated with a nonlinear driver.
See
\cite[Section 3]{CGLLO}
for a full account  on the subject.
See also 
 \cite{HK, G23} 
 for the construction of  nonlinear Young integrals
 in a more general setting.

Given $T > 0$, 
let $X\in C^\g_{2, T} \Lip_k(V)$, 
where $C^\g_{2, T} \Lip_k(V)$
is 
as in \eqref{Ho3}, such that 
\begin{align}
X(0)=0
\qquad \text{and}\qquad 
\updl  X=0.
\label{X0}
\end{align}

\noi
In the following, we assume  that the driver $X_{t, r}$
is given by an integral operator
over the time interval $[r, t]$;
see, for example, \eqref{Xld1}
for the bilinear driver associated with the scaled modulated KdV \eqref{kdv4}.

Given  $\uu\in \mathcal{C}^{\alpha}([0,T];V)$ for some $0<\alpha<1$, 
our goal is to define 
 the nonlinear Young integral $\I^{X}(\uu)$ as the unique function 
 (in a suitable class of functions defined on $[0, T]$)
with $\I^X(\uu)(0) = 0$
 whose increment is given by 
\begin{align}
\label{Y1}
\big(\updl \I^{X}(\uu)\big)_{t,r}=X_{t,r}(\uu_{\bullet}).
\end{align}

\noi
Here, $\uu_\bul$ denotes 
the function $\uu$ evaluated at the variable of integration~$\bul$, satisfying $r \le \bul \le  t$.
As it is, the expression on 
the right-hand side of \eqref{Y1} is not well defined. 
By replacing $\bul$ by the left endpoint $r$, 
we have 
\begin{align}
X_{t, r}(\uu_\bul)
= X_{t, r}(\uu_r) + R_{t, r}
\label{Y2}
\end{align}

\noi
for some two-parameter process $R = R^{X, \uu}$.
Our goal is to find one such error term $R$ with sufficient regularity, 
which will allow us to define the nonlinear Young integral
 $\I^X(\uu)$
 as the unique limit of 
Riemann-Stieltjes type sums; see \eqref{YQ6}.

Define $\Theta$ on $\Dl_{2, T}$ by setting 
\begin{align*}
    \Theta_{t, r} = X_{t, r} (\uu_r), \quad (t, r) \in \Dl_{2, T}. 
\end{align*}

\noi
By applying the coboundary operator $\updl$ to \eqref{Y2} 
with \eqref{X0}, we see that  any error term $R$ (if it exists) must satisfy 
\begin{equation*}
(\updl R)_{t_1,t_2,t_3}
= - (\updl \Theta)_{t_1, t_2, t_3}
= 
X_{t_1,t_2}(\uu_{t_2})-X_{t_1,t_2}(\uu_{t_3}).
\end{equation*}

\noi
It follows  from  the regularity assumptions on $X$
and $\uu$
with~\eqref{Ho2} and~\eqref{Ho3}
that 
$\updl R 
 \in C^{\g + \al}_{3,T}V$; see \cite[Lemma 3.4\,(i)]{CGLLO}.
Thus, if $\g + \al > 1$, 
then we can apply the sewing lemma (Lemma~\ref{LEM:sew})
to define an error term $R$ by the relation:
\begin{align}
R = - \Ld \updl  \Theta
\in C^{\g + \al}_{2,T}V,
\label{YQ4} 
\end{align}
where $\Ld$ denotes the sewing map.
Then, 
putting
 \eqref{Y1},  
 \eqref{Y2},  
and \eqref{YQ4} together,   
we define the {\it nonlinear Young integral} $\I^X(\uu)$ of $\uu$
(with respect to the nonlinear Young driver $X$)
to be  a unique function 
in $ C^\g([0,T]; V)$
with $\I^X(\uu)(0) = 0$
whose increment is given by 
\begin{align*}
\updl (\I^X(\uu))
= (\Id - \Ld \updl )\Ta.
\end{align*}

\noi
In view of \eqref{YQ4} with  $\g + \al > 1$, 
we have 
\begin{align*}
 \lim_{|\Pi([0,t])|\to 0} 
\sum^{n-1}_{j=0} 
R_{t_j,t_{j+1}} = 
-  \lim_{|\Pi([0,t])|\to 0} 
\sum^{n-1}_{j=0} 
(\Ld \updl \Ta)_{t_j,t_{j+1}} = 
0.
\end{align*}

\noi
Hence, 
the nonlinear Young integral $\I^X(\uu)$ 
is given by 
the unique limit of 
Riemann-Stieltjes type sums:
\begin{align}
\I^X(\uu)(t) 
= \lim_{|\Pi([0,t])|\to 0} 
\sum^{n-1}_{j=0} \Theta_{t_j,t_{j+1}}
= \lim_{|\Pi([0,t])|\to 0} 
\sum^{n-1}_{j=0} X_{t_j,t_{j+1}}(\uu_{t_{j+1}}), 
\label{YQ6}
\end{align}

\noi
 where 
the limit is in the sense of Lemma \ref{LEM:sew}\,(iii).
See \cite[Lemma 3.4]{CGLLO}
for a summary of the basic properties
of nonlinear Young integrals.

\medskip

Next, we recall a local well-posedness result
for the  following nonlinear YDE:
\begin{equation}
\label{YDE1}
\uu = u_0 + \I^X(\uu),
\end{equation}

\noi
where $X$ is a nonlinear driver belonging to the class $\cX^{s,\g}_k([0, T] \times \T_\ld)$ defined in \eqref{X1}.
See \cite[Proposition 3.8]{CGLLO} for the proof
(when $\ld = 1$)
and other related results such as persistence of regularity, etc.

\begin{proposition}
\label{PROP:main}
Given   $s \in \R$, $\frac 12 < \g < 1$,  $k \in \N$,  $T \ge 1$, 
and $\ld \ge 1$, 
let     $X \in \cX^{s, \g}_k([0, T]\times \T_\ld)$, 
where $ \cX^{s, \g}_k([0, T]\times \T_\ld)$ is  defined  in \eqref{X1}.
Then, 
 the nonlinear YDE \eqref{YDE1} with 
 the driver $X$ 
is locally well-posed in $H^s(\T_\ld)$.
More precisely, given  $u_0\in H^s(\T_\ld)$, 
there exist $C_0 >0$ and   $\ta>0$, independent of $u_0$ and $X$, 
and 
 a unique solution $\uu \in \cC^\g([0, \tau]; H^s(\T_\ld))$
   to~\eqref{YDE1}  
with $\uu|_{t= 0} = u_0$, where the local existence time $\tau = 
\tau\big(\|u_0\|_{H^s(\T_\ld)}, \|X\|_{\cX^{s, \g}_k([0, T] \times \T_\ld)}\big) > 0$
satisfies 
\begin{align}
\tau \ge  C_0 
\Big(\|X\|_{\cX^{s,\g}_k([0, T] \times \T_\ld)}  (1+ \|u_0\|_{H^s(\T_\ld)})\Big)^{-\ta} .
\label{YD1}
\end{align}

\noi 
Moreover, given $0 < \al < \g$ with $\al + \gamma > 1$, 
there exists $C_\al > 0$ such that 
\begin{align*}
\| \uu(t) \|_{\CC^\al_\tau H^s_x}\le C_\al \|u_0\|_{H^s(\T_\ld)}.
\end{align*}

\end{proposition}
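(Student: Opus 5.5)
The proposition is an abstract statement about a nonlinear YDE in the Hilbert space $V = H^s(\T_\ld)$. I would prove it by a Banach fixed point argument for the map
\[
\Gamma(\uu) = u_0 + \I^X(\uu)
\]
on a closed ball of $\CC^\al([0,\tau]; V)$, for a fixed $0<\al<\g$ with $\al+\g>1$. This essentially reproduces the argument of \cite[Proposition 3.8]{CGLLO}. The only thing to check is that no constant depends on $\ld$. That holds because the sewing lemma (Lemma~\ref{LEM:sew}) holds for any Banach space $V$ with the universal constant $(2^\z-2)^{-1}$. Moreover, all the estimates below use only the norms $\|X\|_{C^\g_{2,T}\Lip_k(V)}$ and $\|DX\|_{C^\g_{2,T}\Lip_{k-1}(V;\cL_1(V))}$, never the concrete structure of $\T_\ld$.

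\textbf{Step 1: a priori bound on the integral.} For $\uu\in \CC^\al_\tau V$, I write $\updl\I^X(\uu) = \Ta - \Ld\updl\Ta$ with $\Ta_{t,r} = X_{t,r}(\uu_r)$. Using $X(0)=0$, the first term satisfies
\[
\|\Ta_{t,r}\|_V \le \|X\|\,|t-r|^\g\,\|\uu_r\|_V(1+\|\uu_r\|_V)^{k-1}.
\]
For the second term, $(\updl\Ta)_{t_1,t_2,t_3} = X_{t_1,t_2}(\uu_{t_3}) - X_{t_1,t_2}(\uu_{t_2})$, so
\[
\|(\updl\Ta)_{t_1,t_2,t_3}\|_V \le \|X\|\,\|\uu\|_{C^\al}(1+2\|\uu\|_{L^\infty})^{k-1}|t_1-t_2|^\g|t_2-t_3|^\al,
\]
which belongs to $C^{\g+\al}_{3,\tau}V$. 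Lemma~\ref{LEM:sew}\,(ii) then gives
\[
\|\I^X(\uu)\|_{C^\g_\tau V} \les \|X\|\,(1+\|\uu\|_{\CC^\al_\tau V})^{k}\,(1+\tau^{\al}).
\]
Since $\I^X(\uu)(0)=0$, the $L^\infty$ norm and the $C^\al$ seminorm on $[0,\tau]$ are both bounded by $\tau^{\g-\al}$ times this quantity (up to a factor $\max(1,\tau^{\al})$). So $\Gamma$ maps the ball $B_R$ with $R = 2\|u_0\|_V$ (or $R=2(1+\|u_0\|_V)$) into itself once $\tau^{\g-\al}\,\|X\|(1+R)^k \ll R$.

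\textbf{Step 2: the difference estimate.} This is the main obstacle. For $\uu,\vv\in B_R$ one needs
\[
\|\I^X(\uu)-\I^X(\vv)\|_{\CC^\al_\tau V} \les \tau^{\g-\al}\,\|X\|_{\cX^{s,\g}_k}(1+R)^{k-1}\,\|\uu-\vv\|_{\CC^\al_\tau V}.
\]
The $\Ta$-part is handled by the Lipschitz bound \eqref{Lip1}. For the sewing part, the three-parameter quantity is a second-order difference
\[
X_{t_1,t_2}(\uu_{t_3}) - X_{t_1,t_2}(\uu_{t_2}) - X_{t_1,t_2}(\vv_{t_3}) + X_{t_1,t_2}(\vv_{t_2}).
\]
I would write each pair via the fundamental theorem of calculus,
\[
X_{t_1,t_2}(\uu_{t_3}) - X_{t_1,t_2}(\uu_{t_2}) = \int_0^1 DX_{t_1,t_2}\big[\uu_{t_2}+\theta(\updl\uu)_{t_3,t_2}\big]\big((\updl\uu)_{t_3,t_2}\big)\,d\theta,
\]
and likewise for $\vv$. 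The difference then splits into two pieces. One is $DX$ evaluated at the same base point applied to $(\updl(\uu-\vv))_{t_3,t_2}$. The other is the difference of $DX$ at two base points applied to $(\updl\vv)_{t_3,t_2}$, which is controlled by the $\Lip_{k-1}$ bound on $DX$. This is exactly where $X\in\Lip^2_k$ (rather than just $\Lip_k$) is needed. Both pieces are $O(|t_1-t_2|^\g|t_2-t_3|^\al)$ times $\|X\|_{\cX}(1+R)^{k-1}\|\uu-\vv\|_{\CC^\al}$, so the sewing lemma applies again.

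\textbf{Step 3: conclusion.} Choosing $\tau\sim(\|X\|_{\cX}(1+\|u_0\|_V))^{-\ta}$ with $\ta$ depending on $k,\al,\g$ makes $\Gamma$ a contraction on $B_R$. This yields existence and the lower bound \eqref{YD1}. The bound $\|\uu\|_{\CC^\al_\tau V}\le C_\al\|u_0\|_V$ comes from the fixed point lying in the ball and from $X(0)=0$. The solution actually lies in $\CC^\g$, since $\uu - u_0 = \I^X(\uu)\in C^\g$ by Step 1.

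Uniqueness in $\CC^\g([0,\tau];V)$ follows from the difference estimate of Step 2 on short subintervals, iterated. Continuous dependence on $u_0$ (and on $X$, by the same bilinear-difference argument with $X$ varying) follows from the same contraction estimates.
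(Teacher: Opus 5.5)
Your proposal is correct and follows essentially the same route as the paper, which defers to \cite[Proposition 3.8]{CGLLO} and summarizes the argument in Remark~\ref{REM:theta}: a contraction for $\Gamma(\uu)=u_0+\I^X(\uu)$ on a ball in $\CC^\al([0,\tau];H^s(\T_\ld))$ built from the sewing lemma, with the second-order difference controlled through the $\Lip^2_k$ structure of $X$, and with no constant depending on $\ld$. One small correction: in the self-map step, use the bound that is linear in $\|\uu\|_{\CC^\al_\tau H^s_x}$ (available because $X(0)=0$, as in your first two displays and in \eqref{YD4}) rather than the cruder $(1+\|\uu\|)^k$, because with the cruder bound the choice $R=2\|u_0\|$ forces $\tau\to0$ as $u_0\to0$ and you lose the uniform lower bound \eqref{YD1}.
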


\begin{remark}\label{REM:theta}\rm
(i) 
Recall that  the proof of Proposition \ref{PROP:main}
(= \cite[Proposition 3.8]{CGLLO})
is based on a standard contraction argument
for the map $\G = \G^{X, u_0}$ on 
$\cC^\al([0, \tau]; H^s(\T_\ld))$, given by 
\begin{align*}
\G(\uu) = u_0 + \I^X(\uu), 
\end{align*}

\noi
where  $\alpha \in (0, \g)$ satisfies  $\g+\al>1$. 
Given $R > 0$, 
let 
$B_R\subset 
\cC^\al([0, \tau]; H^s(\T_\ld))$
denote the closed ball of radius $R$ centered at the origin.
It was shown in  \cite[(3.53) and (3.63)]{CGLLO}
that,  for any $\uu, \vv \in B_R$, 
 we have
\begin{align}
\begin{split}
\|\G(\uu)\|_{\CC^\al_\tau H^s_x}
&\le \|u_0\|_{H^s(\T_\ld)} \\
& \quad \, +  C\tau^{\g - \al}
\|X\|_{\cX^{s, \g}_k([0, T] \times \T_\ld)}
(1 + R)^{k-1}
\|\uu\|_{\CC^\al_{\tau } H^s_x}, \\ 
\|\G(\uu)- \G(\vv)\|_{\cC^\al _{\tau} H^s_x}
&\le
C\tau^{\g - \al}\|X\|_{\cX^{s, \g}_k([0, T] \times \T_\ld)}  (1+R)^{k-1}\|\uu- \vv \|_{\CC^\al _\tau H^s_x}.
\end{split}
\label{YD4}
\end{align}

\noi
Then, by setting $R = 2\|u_0\|_{H^s(\T_\ld)}$
and choosing $\tau > 0$
sufficiently small, 
we see that $\G$ is a contraction on the ball $B_R
\subset \cC^\al([0, \tau]; H^s(\T_\ld))$.
We point out that,  in view of \eqref{YD4},  
 the exponent $\theta$ in \eqref{YD1} 
 can be taken as  $\theta=\frac{1}{\gamma-\alpha}$.  

\medskip 
\noi
(ii) As pointed out in \cite[Remark 3.9\,(i)]{CGLLO}, 
a solution to the nonlinear YDE \eqref{YDE1} constructed in  Proposition \ref{PROP:main} 
satisfies  the following blowup alternative:
\begin{align} 
\label{BA1}
    T_{\ast} = \infty \qquad \text{or} \qquad \lim_{t \nearrow T_{\ast}} \| \uu (t) \|_{H^{s}} = \infty, 
\end{align}

\noi
where  $T_{\ast} \in (0, \infty]$ denotes the maximal time of existence.

\end{remark}

\begin{remark}\label{REM:sol1}\rm

Let $T > 0$.
Given a  $(\rho,\g)$-irregular function $w$ on $[0, T]$ in the sense of Definition~\ref{DEF:ir}, 
we say that $u$ is a solution to the modulated KdV  \eqref{kdv1} in $[0,\tau]\times \T$ 
(for some $0 < \tau \le T$)
with initial data $u |_{t = 0} = u_0 \in H^s(\T)$, 
if 
its modulated interaction representation $\uu$
belongs to $\CC^{\al} ([0, \tau]; H^s(\T))$ 
for  some $\al \in (1-\g, \g)$
and
satisfies the nonlinear YDE \eqref{YDE0}
on the time interval $[0, \tau]$, 
where 
$\I^{X} (\uu) $ is interpreted as the nonlinear Young integral of $\uu$
with respect to the bilinear driver 
$X$
defined in~\eqref{bd0} (see also \eqref{Xld1}).

We extend an analogous definition
to  the scaled modulated KdV \eqref{kdv4} on $\T_\ld$
and the scaled modulated $I$-KdV \eqref{kdv5}
on $\T_\ld$.

\end{remark}

\section{Refined global well-posedness}
\label{SEC:GWP}

In this section, we present a proof of Theorem \ref{THM:main}
by combining the $I$-method, the sewing lemma (Lemma \ref{LEM:sew}), 
and the scaling transforms \eqref{scaling1}
and \eqref{scaling2}.

\subsection{Scaled modulated KdV} 
\label{SUBSEC:I1}

In this subsection, we summarize the basic properties
of the scaling transforms
\eqref{scaling1} and \eqref{scaling2}.

Given  a modulation function $w$ on $[0, T]$ for some $T >0$, 
let $u$ be a solution to the modulated KdV \eqref{kdv1} on $[0, T] \times \T$
with initial data $u|_{t= 0} = u_0$.
Given $b \in \R$ and $\ld \ge 1$, 
let $u^\ld$ and $w^\ld$ be the scaled functions defined in 
\eqref{scaling1} and \eqref{scaling2}, respectively.
Then, as pointed out in Subsection \ref{SUBSEC:1.2}, 
the scaled function $u^\ld$ is a solution to the 
scaled modulated KdV \eqref{kdv4}
on $[0, \lambda^b T] \times \T_{\lambda}$
with the  scaled initial data 
$u^\ld|_{t = 0}  = u_0^\ld$, 
where $u_0^\ld$ is as in 
 \eqref{scaling4}.

Let us first record the scaling property of the 
$H^s(\T_\ld)$- and $\dot H^s(\T_\ld)$-norms defined in \eqref{FT4}.
When $\ld = 1$, 
the $H^s(\T)$-norm and $\dot H^s(\T)$-norm
are equivalent in the current mean-zero setting (recall \eqref{mean0}).
When $\ld \gg1$, 
we still have 
$\|f \|_{H^s(\T_\ld)}\sim_\ld \|f\|_{\dot H^s(\T_\ld)}$
for a mean-zero function $f$ on $\T_\ld$
but the implicit constant diverges as $\ld \to \infty$.
As in \cite{CGLLO}, 
we will  work with the homogeneous Sobolev spaces
in the following, 
since they enjoy a better scaling property; see also  \cite[Remark 7.2]{CGLLO}.
From~\eqref{scaling4}, 
we have $\F_{\T_{\ld}}(u_0^{\ld}) (n) = \ld^{-b+2} \F_{\T} (u_0) (\ld n)$, $n \in \Z_{\ld}$,
which yields
\begin{align} 
\label{scaling3}
    \| u_0^{\lambda} \|_{\dot{H}^s (\T_{\lambda})} = \lambda^{-b + \frac 32 - s} \| u_0 \|_{\dot{H}^s (\T)} 
\end{align} 

\noi 
for any $s \in \R$, while, for the non-homogeneous Sobolev spaces, we have 
\begin{align*}
\|u_0^\ld\|_{ H^s(\T_\ld)}
\le  \ld^{-b+\frac 32 - s} \|u_0\|_{ H^s(\T)}
\end{align*}
for $s\le 0$.
In particular, 
given any $s < 0$, by choosing 
\begin{align}
b>\frac{3}{2}-s
\label{scaling5}
\end{align}

\noi
and $\ld \gg_{\|u_0\|_{H^s(\T)}}1$,   we can make
the $\dot H^s(\T_\ld)$-norm of the scaled initial data $u_0^\ld$
small.
We note that, when $b = 3$ corresponding to the classical
KdV scaling \eqref{scaling0}, 
the condition \eqref{scaling5} reduces
to the scaling subcritical regime $s > - \frac 32$.

Let $\uu^\ld$ be 
the modulated interaction representation of the scaled function $u^\ld$ in \eqref{scaling1}, 
given by 
\begin{align}
\uu^\ld(t) = U^{w^\ld}(t)^{-1} u^\ld(t).
\label{ME0}
\end{align}

\noi
Then, $\uu^\ld$ (formally) satisfies the following integral equation:
\begin{align*}
\uu^{\ld} (t)=u^\ld_0
+
 \int_0^t   U^{w^\ld}(t')^{-1}  \dx\big(  ( U^{w^\ld} (t') \uu^\ld(t'))^2   \big)dt' .
\end{align*}

\noi
The associated bilinear driver $X^\ld$ 
is then given by 
\begin{align}
X_{t,r}^{\ld} (f_1, f_2)
=\int_{r}^{t}
U^{w^\ld}(t')^{-1}
\dx \big( (U^{w^\ld}(t') f_1  )(
U^{w^\ld}(t') f_2)\big) dt'
\label{Xld1}
\end{align}

\noi
for $0 \le r < t \le \ld^b T$ and functions $f_1$ and $  f_2$ on $\T_\ld$. 
When two arguments agree, 
we simply set 
$X_{t, r} (f) = X_{t, r} (f, f)$.
Moreover, 
when $\ld = 1$, we set 
\begin{align}
X_{t, r} = X^{1}_{t, r}.
\label{bd0}
\end{align}

\noi
By taking the Fourier transform
(recall \eqref{FT3}),\footnote{Recall that, in \eqref{bd0a},  the terms corresponding to $n_1\!=n_2\!=\!0$ do not appear under the mean-zero assumption.}
 we have
\begin{align}
\label{bd0a} 
    \mathcal{F}_{\T_{\ld}} \big( X_{t,r}^{\ld} (f_1, f_2) \big) (n) 
= \frac{i n}{\ld} \sum_{\substack{n_1, n_2 \in \Z_{\ld}^{\ast} \\n = n_1 + n_2}} \Phi^{w^{\ld}}_{t,r} (\Xi_{{\KDV}} (\bar{n})) \ft{f}_{1} (n_1) \ft{f}_{2} (n_2), 
\end{align} 

\noi 
where $\Phi_{t, r}^{w^{\ld}}$ is as in \eqref{rho2} with $w$ replaced by $w^{\ld}$ 
and $\Xi_{\KDV} (\bar{n})$  denotes the  resonance function
 given by 
\begin{align}
\Xi_\KDV (\bar n) &  = \Xi_\KDV (n,n_1,n_2) 
= - n^3+ n_1^3 +n_2^3.
\label{K3}
\end{align}

\noi
Recall that, under $n = n_1+ n_2$, we have
\begin{align}
\Xi_\KDV (\bar n) = - 3n n_1 n_2.
\label{K3a}
\end{align}

The following lemma states the  equivalence of the unscaled and scaled problems. 

\begin{lemma}
\label{LEM:tri3}
Let $b \in \R$ and $\ld \ge 1$.
Given $\rho \ge\frac 12$,  $\frac 12 < \g < 1$,  and $T> 0$, 
let  $w$ be $(\rho,\g)$-irregular on $[0, T]$. 
Fix $s \in \R$ satisfying \eqref{reg1}
and a mean-zero function $u_0 \in H^s(\T)$.
Then,  $u$ is a solution to the modulated KdV \eqref{kdv1}
on $[0, \tau ]\times \T$
with  $u|_{t = 0} = u_0$ 
for some $0 < \tau \le T$
if and only if the scaled function $u^\ld$ defined in \eqref{scaling1}
is a   solution to the scaled modulated KdV~\eqref{kdv4} on $[0, \ld^{b}\tau ]\times \T_\ld$
with the scaled initial data 
$u^\ld|_{t = 0} = u_0^{\ld}$ defined in~\eqref{scaling4}.
\end{lemma}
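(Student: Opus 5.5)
The plan is to show that the scaling map intertwines the modulated interaction representations and the bilinear drivers, and hence the Riemann–Stieltjes sums \eqref{YQ6} defining the nonlinear Young integrals. In Fourier space, $\uu^\ld$ in \eqref{ME0} satisfies
\begin{align*}
\F_{\T_\ld}(\uu^\ld(t))(n) = e^{-i w^\ld(t) n^3}\,\F_{\T_\ld}(u^\ld(t))(n)
\end{align*}
(up to the sign convention of $U^w$). Since $\F_{\T_\ld}(u^\ld(t))(n) = \ld^{-b+2}\,\F_\T(u(\ld^{-b}t))(\ld n)$ and $w^\ld(t) n^3 = \ld^3 w(\ld^{-b}t)\, n^3 = w(\ld^{-b}t)(\ld n)^3$, we obtain
\begin{align*}
\F_{\T_\ld}(\uu^\ld(t))(n) = \ld^{-b+2}\,\F_\T(\uu(\ld^{-b}t))(\ld n).
\end{align*}
Write $S_\ld$ for the linear spatial map $f\mapsto \ld^{-b+1}f(\ld^{-1}\cdot)$. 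Then $\uu^\ld(t) = S_\ld\,\uu(\ld^{-b}t)$. Moreover, $S_\ld$ is a bounded isomorphism $H^s(\T)\to H^s(\T_\ld)$ (homogeneous norms rescale by \eqref{scaling3}), and it preserves H\"older continuity in time with the constant changed by a factor of $\ld$. Consequently $\uu\in\CC^\al([0,\tau];H^s(\T))$ if and only if $\uu^\ld\in \CC^\al([0,\ld^b\tau];H^s(\T_\ld))$.

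The key step is the driver identity
\begin{align*}
X^\ld_{t,r}(S_\ld f_1, S_\ld f_2) = S_\ld\, X_{\ld^{-b}t,\ld^{-b}r}(f_1,f_2).
\end{align*}
To verify it, use \eqref{bd0a}. The substitution $t'=\ld^b t''$ gives
\begin{align*}
\Phi^{w^\ld}_{t,r}(a) = \ld^b\,\Phi^{w}_{\ld^{-b}t,\ld^{-b}r}(\ld^3 a).
\end{align*}
With $a = \Xi_\KDV(\bar n)$ and $\ld^3\Xi_\KDV(n,n_1,n_2) = \Xi_\KDV(\ld n,\ld n_1,\ld n_2)$, write $m=\ld n$ and $m_j = \ld n_j\in\Z^*$. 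The prefactor then works out as
\begin{align*}
\frac{in}{\ld}\cdot \ld^b\cdot \ld^{2(-b+2)} = \ld^{-b+2}\cdot im,
\end{align*}
which matches $\F_{\T_\ld}(S_\ld g)(n) = \ld^{-b+2}\hat g(\ld n)$ applied to $g = X_{\ld^{-b}t,\ld^{-b}r}(f_1,f_2)$. This exponent bookkeeping is the one place where the particular choices \eqref{scaling1}--\eqref{scaling2} matter, and it is the step where I expect to have to be most careful.

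It also follows that $X^\ld$ inherits the required regularity from $X$. On $[0,\ld^bT]$, $w^\ld$ is $(\rho,\g)$-irregular with a $\ld$-dependent constant, because the substitution above gives
\begin{align*}
|\Phi^{w^\ld}_{t,r}(a)| \le \ld^{b(1-\g)}\jb{\ld^3 a}^{-\rho}\|\Phi^w\|\,|t-r|^\g.
\end{align*}
By the intertwining identity together with boundedness of $S_\ld$ and its inverse, $X^\ld\in\cX^{s,\g}_2([0,\ld^bT]\times\T_\ld)$ whenever $X\in \cX^{s,\g}_2([0,T]\times \T)$. The latter holds under \eqref{reg1} by \cite{CGLLO}. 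Hence both nonlinear Young integrals are well defined via Lemma \ref{LEM:sew}.

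Finally, take any partition $\Pi$ of $[0,t]$ and let $\ld^{-b}\Pi$ be the corresponding partition of $[0,\ld^{-b}t]$. By linearity of $S_\ld$ and the driver identity,
\begin{align*}
\sum_j X^\ld_{t_j,t_{j+1}}(\uu^\ld_{t_{j+1}}) = S_\ld\sum_j X_{\ld^{-b}t_j,\ld^{-b}t_{j+1}}(\uu_{\ld^{-b}t_{j+1}}).
\end{align*}
Pass to the limit as the mesh tends to zero using \eqref{YQ6} and continuity of $S_\ld$; this yields $\I^{X^\ld}(\uu^\ld)(t) = S_\ld\,\I^X(\uu)(\ld^{-b}t)$. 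Applying $S_\ld$ to \eqref{YDE0} therefore turns it into the scaled YDE with initial data $u_0^\ld = S_\ld u_0$. Since $S_\ld$ is invertible, the argument reverses, which gives the equivalence in the sense of Remark \ref{REM:sol1}.
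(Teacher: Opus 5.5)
Your proposal is correct and follows essentially the same route as the paper, which defers to the proof of \cite[Lemma~7.3]{CGLLO}. That route shows that the spatial rescaling $S_\lambda$, combined with the time dilation $t\mapsto \lambda^{-b}t$, intertwines the interaction representations and the bilinear drivers, $X^\lambda_{t,r}(S_\lambda f_1, S_\lambda f_2) = S_\lambda X_{\lambda^{-b}t,\lambda^{-b}r}(f_1,f_2)$, and then passes this identity through the Riemann--Stieltjes sums \eqref{YQ6}. Your exponent bookkeeping $\frac{in}{\lambda}\cdot\lambda^{b}\cdot\lambda^{2(-b+2)}=\lambda^{-b+2}\, im$ is right, and this is exactly where the $\lambda^3$ in the modulation scaling \eqref{scaling2} is needed; the only slip is cosmetic: the H\"older seminorm changes by a power of $\lambda$ (for instance $\lambda^{-b\alpha}$ from the time dilation) rather than by a factor of $\lambda$, which does not affect the argument.
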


See Remark \ref{REM:sol1}
for the notion of a solution.
Since Lemma \ref{LEM:tri3} follows
from a straightforward modification of the proof of \cite[Lemma 7.3]{CGLLO}, 
we omit details.

\medskip

Next, we study the mapping property of the bilinear driver $X^\ld$ in \eqref{Xld1}.
Once we establish the following lemma,  
 local well-posedness of the scaled modulated KdV~\eqref{kdv4}
in $\dot H^s(\T_\ld)$ (which reduces to Theorem \ref{THM:old}\,(i) when $\ld = 1$)
follows from Proposition \ref{PROP:main}.

\begin{lemma} 
\label{LEM:tri1} 
Let $b \in \R$ and $\lambda \geq 1$. Given $\rho \geq \frac 12$, $\frac 12 < \gamma < 1$,  and $T > 0$,  let $w$ be $(\rho, \gamma)$-irregular on $[0, T]$.
 Then, for any $\rho \geq \frac 12$ and $s \in \R$ satisfying~\eqref{reg1}, 
  the bilinear driver $X^{\lambda}$
in~\eqref{Xld1}
 belongs to  $\dot \cX_2^{s, \gamma} ([0, \lambda^b T] \times \T_{\lambda})$ defined in \eqref{X1a},  satisfying the bound{\rm :}
\begin{align} 
\| X^{\lambda}_{t, r} \|_{\mathcal{L}_2 (\dot{H}^s (\T_{\lambda}))} 
\les \lambda^{-\frac 32 + b(1-\gamma) + s } \| \Phi^w \|_{\mathcal{W}^{\rho, \gamma}_{T}} |t - r|^{\gamma} 
\label{bd1}
\end{align} 

\noi 
for any $0 \leq r < t \leq \lambda^b T$.  
\end{lemma}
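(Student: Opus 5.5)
The plan is to reduce the bound \eqref{bd1} to the unscaled estimate for $\ld = 1$ (i.e.~the mapping property of $X = X^1$ from \cite{CGLLO}), keeping track of the powers of $\ld$ that the scaling produces. The algebraic structure $X^\ld(0) = DX^\ld[0]=0$, $\updl X^\ld = 0$ is immediate from \eqref{Xld1}: it is bilinear and additive in the time interval. Since $X^\ld$ is bilinear and symmetric, the $\Lip_2$ and $D X^\ld$ parts of the $\dot\cX_2^{s,\g}$-norm follow from the $\cL_2$ bound \eqref{bd1}. So everything reduces to \eqref{bd1}.

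The first step is to compute the phase. From \eqref{scaling2}, we have $\Phi^{w^\ld}_{t,r}(a) = \int_r^t e^{i a \ld^3 w(\ld^{-b}t')}dt'$. The change of variables $t' = \ld^b t''$ gives
\[
\Phi^{w^\ld}_{t,r}(a) = \ld^b \Phi^w_{\ld^{-b}t,\ld^{-b}r}(\ld^3 a).
\]
Since $0\le r<t\le \ld^bT$, the rescaled times lie in $[0,T]$. Definition \ref{DEF:ir} then yields
\[
|\Phi^{w^\ld}_{t,r}(a)| \le \ld^{b(1-\g)} \jb{\ld^3 a}^{-\rho}\|\Phi^w\|_{\mathcal W^{\rho,\g}_T}|t-r|^\g .
\]
With $a = \Xi_\KDV(\bar n) = -3nn_1n_2$ and $n,n_1,n_2\in\Z_\ld^*$, set $m = \ld n$ and $m_j = \ld n_j$, which lie in $\Z^*$. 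Then $\ld^3 a = -3 m m_1 m_2$ is exactly the unscaled resonance function on $\Z^*$.

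The second step is to rewrite \eqref{bd0a} in terms of integer frequencies. Put $g_j(m_j) = \ft f_j(m_j/\ld)$. Then
\[
\|f_j\|_{\dot H^s(\T_\ld)}^2 = \ld^{-1-2s}\sum_{m_j\in\Z^*}|m_j|^{2s}|g_j(m_j)|^2 .
\]
The $\dot H^s(\T_\ld)$-norm of $X^\ld_{t,r}(f_1,f_2)$ is bounded by
\[
\ld^{b(1-\g)}\|\Phi^w\|\,|t-r|^\g\cdot \ld^{-\frac12 - s}\cdot \ld^{-1}\cdot\ld^{-1}\,
\Big\|\, |m|^{s}\, |m| \sum_{m=m_1+m_2} \jb{3mm_1m_2}^{-\rho}|g_1(m_1)||g_2(m_2)|\Big\|_{\ell^2_m}.
\]
Here the factor $\frac{in}{\ld}$ contributes $\ld^{-1}|m|\ld^{-1}$. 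The last $\ell^2$ expression is precisely the trilinear sum controlled in the proof of Lemma \ref{LEM:tri1} for $\ld=1$ in \cite{CGLLO}, which holds under \eqref{reg1}. It is bounded by $\prod_j \| |m_j|^s g_j\|_{\ell^2}$, and each factor equals $\ld^{\frac12+s}\|f_j\|_{\dot H^s(\T_\ld)}$.

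The third step is to collect the powers of $\ld$:
\[
b(1-\g) - \tfrac12 - s - 2 + 2\big(\tfrac12 + s\big) = -\tfrac32 + b(1-\g) + s ,
\]
which matches \eqref{bd1}.

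The main obstacle is making sure the $\ld=1$ estimate from \cite{CGLLO} is used in exactly the integer-frequency form above. That estimate is a sum over $\Z^*$ weighted by $\jb{mm_1m_2}^{-\rho}$, and it is proved by case analysis on the relative sizes of $|m|,|m_1|,|m_2|$ together with Cauchy–Schwarz, using $\sum_{m_1}|m_1|^{-2\rho}$-type sums. That case analysis is where \eqref{reg1} enters. Because all frequencies are nonzero integers after rescaling, no $\ld$-dependent loss arises from counting. If that estimate cannot be quoted directly, I would redo the case analysis: take $|m|^{1+s}\jb{mm_1m_2}^{-\rho}$ against $|m_1|^{-s}|m_2|^{-s}$, and apply Schur's test over the convolution constraint.
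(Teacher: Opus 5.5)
Your proposal is correct and follows the paper's proof. The key steps are the same: the change of variables $\Phi^{w^\ld}_{t,r}(a)=\ld^b\Phi^w_{\ld^{-b}t,\ld^{-b}r}(\ld^3 a)$ (the paper's \eqref{MF2}), the rescaling $m=\ld n$, $m_j=\ld n_j$ to frequencies in $\Z^*$, and an appeal to the $\ld=1$ sum bound, with the same power count $-\frac32+b(1-\g)+s$.

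The differences are cosmetic. You keep the weight as $\jb{3mm_1m_2}^{-\rho}$ instead of extracting $\ld^{-3\rho}$ and recovering $\ld^{3\rho}$ after rescaling. You also rescale the bilinear form directly rather than the Schur-type sup-sum in \eqref{MF1}.
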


While Lemma \ref{LEM:tri1} follows from a straightforward modification of the 
proof of \cite[Lemma~7.1]{CGLLO} using the new scaling \eqref{scaling2}
(see, in particular, \eqref{MF2}),  
we present its proof since 
a similar modification is required
to obtain 
Lemma \ref{LEM:tri2} and Proposition \ref{PROP:com}
from \cite[Lemma~7.4]{CGLLO} and \cite[Proposition~7.6]{CGLLO}, respectively.

\begin{proof} 
In view of 
\cite[Lemma~3.1\,(i) and Remark~3.2\,(ii)]{CGLLO}, 
it suffices to prove the bound~\eqref{bd1}.

Arguing as in \cite[(7.15)]{CGLLO} 
with  \eqref{FT4} and  \eqref{bd0a},  we have 
\begin{align}
\label{MF1} 
\begin{split}
\| X^{\lambda}_{t, r} \|_{\mathcal{L}_2 (\dot{H}^s (\T_{\lambda}))} 
 \le\ld^{-\frac 12} \bigg( \sup_{n_1 \in \Z_{\ld}^{\ast}} \sum_{\substack{n, n_2 \in \Z_{\ld}^{\ast} \\ n_1 = n - n_2}}   |n|^{2s + 2} \frac {|\Phi^{w^{\ld}}_{t, r} (\Xi_{{\KDV}} (\bar{n}))|^2}{|n_1|^{2s} |n_2|^{2s}}\bigg)^\frac 12 , 
\end{split}
\end{align} 

\noi 
where $\Phi^{w^{\ld}}_{t, r}$ and $\Xi_{{\KDV}} (\bar{n})$ are as in \eqref{rho2} and \eqref{K3}, respectively.
 From \eqref{rho2}, \eqref{scaling2}, 
 a change of variables, 
 \eqref{rho1}, and \eqref{K3a}, we have 
\begin{align} 
\label{MF2} 
\begin{split}
    \big| \Phi_{t, r}^{w^{\lambda}} (\Xi_{{\KDV}} (\bar{n})) \big| 
    & = \bigg| \int_{r}^{t} e^{i \Xi_{{\KDV}} (\bar{n}) w^{\lambda} (t')} dt' \bigg| 
    = \ld^b \bigg| \int_{\ld^{-b} r}^{\ld^{-b} t} e^{i \ld^3 \Xi_{{\KDV}} (\bar{n}) w(t')} dt' \bigg|\\
    &\les \lambda^{b (1 - \gamma) - 3 \rho} \| \Phi^w \|_{\mathcal{W}_{T}^{\rho, \gamma}} |t - r|^{\gamma} |n n_1 n_2|^{-\rho} 
\end{split}
\end{align} 

\noi 
for any $n$, $n_1$, $n_2 \in \Z^{\ast}_{\lambda}$, satisfying $n = n_1 + n_2$.
Here, we used the fact that $\jb{\ld^3 n n_1 n_2} \sim 
\ld^3 |n n_1 n_2|$.

Given $n,\, n_1,\, n_2 \in \Z_{\ld}^{\ast}$, let $m = \ld n$ and $m_j = \ld n_j$, $j =1, 2$. 
Then it follows from \eqref{MF1}, \eqref{MF2}, 
and arguing as in the proof of \cite[Proposition 4.1\,(i)]{CGLLO} that 
\begin{align*}
    \| X_{t, r}^{\ld} \|_{\mathcal{L}_2 (\dot{H}^s (\T_{\ld}))} & \les \ld^{- \frac 12 + b(1 - \gamma) - 3\rho} \| \Phi^w \|_{\mathcal{W}_{T}^{\rho, \gamma}} |t - r|^{\gamma} \\ 
&\quad\, \times \bigg( \sup_{n_1 \in \Z_{\ld}^{\ast}} \sum_{\substack{n,n_2 \in \Z_{\ld}^{\ast} \\ n_1 = n - n_2}}  |n|^{2 - 4\rho} \frac {|n|^{2s + 2\rho}}{|n_1|^{2s + 2\rho} |n_2|^{2s + 2\rho}} \bigg)^{\frac 12} \\ 
    &\les \ld^{- \frac 12 + b(1 - \gamma) - 3\rho} \| \Phi^w \|_{\mathcal{W}_{T}^{\rho, \gamma}} |t - r|^{\gamma} \\ 
&\quad\,  \times 
\ld^{-1 + s + 3\rho}
\bigg( \sup_{m_1 \in \Z^{\ast}} 
\sum_{\substack{m,m_2 \in \Z^{\ast} \\ m_1 = m - m_2}}  |m|^{2 - 4\rho} \frac {|m|^{2s + 2\rho}}{|m_1|^{2s + 2\rho} |m_2|^{2s + 2\rho}} \bigg)^{\frac 12} \\ 
&\les \ld^{-\frac 32 + b(1 - \gamma) + s} \| \Phi^w \|_{\mathcal{W}_{T}^{\rho, \gamma}} |t - r|^{\gamma} 
\end{align*} 

\noi 
for any $0 \le r < t \le \ld^b T$, provided that \eqref{reg1} holds.
\end{proof}

\subsection{Scaled modulated $I$-KdV equation} \label{SUBSEC:I2}

In this and the next subsections, we carry out preliminary analysis
for the $I$-method.

Fix   $s<0$.
 Given $N \geq 1$,
 we define a smooth, even, 
 non-increasing (in $|\xi|$) 
 function 
 $m_{s, N} \in C^\infty(\R; [0, 1])$ 
 by setting
\begin{align*}
m_{s, N}(\xi)=
\begin{cases}
1, 
&\text{for }
|\xi|\leq N,   \\
\frac{ |\xi|^s }{ N^s}, 
&\text{for }
|\xi|\geq  2N.
\end{cases}
\end{align*}

\noi
Then, we define the so-called $I$-operator $I = I_{s, N}$
to be the Fourier multiplier operator with the multiplier $m_{s, N}$:
\begin{align}
\ft{I_{s, N}f}(\xi)=m_{s, N}(\xi)\ft{f}(\xi), 
\label{Ix2}
\end{align}

\noi
satisfying the  bound:
\begin{align}
\| f \|_{\dot H^s (\T_\ld) } \leq \ld^{-s}\| I f \|_{L^2(\T_\ld)}\quad\text{and}\quad
\| I f \|_{L^2(\T_\ld)}
\les N^{-s} \| f \|_{H^s(\T_\ld)}.
\label{I1}
\end{align}

By applying the $I$-operator defined in \eqref{Ix2} to 
the scaled modulated KdV \eqref{kdv4}, 
where $u^\ld$ and $w^\ld$ are as in~\eqref{scaling1}
and  \eqref{scaling2}, respectively, 
we obtain 
the following scaled modulated $I$-KdV system on the dilated circle $\T_{\lambda}$: 
\begin{equation}
\label{kdv5} 
\dt I u^{\lambda} + \partial_x^3 I u^{\lambda} \cdot \dt w^{\lambda} = \dx I \big( (u^{\lambda})^2 \big).
\end{equation} 

\noi 
By writing $\dx I \big( (u^{\lambda})^2\big) = \dx I \big( (I^{-1} I u^{\lambda}) (I^{-1} I u^{\lambda}) \big)$, 
we define  the associated bilinear driver $Y^{\lambda}$ 
by 
\begin{equation} 
Y^{\lambda} (f_1, f_2) = I X^{\lambda} (I^{-1} f_1, I^{-1} f_2)
\label{Xld2}
\end{equation} 

\noi 
for functions $f_1$ and $f_2$ on $\T_{\lambda}$, where $X^{\lambda}$  is  as in \eqref{Xld1}.

A straightforward modification of the proof of \cite[Lemma 7.4]{CGLLO}
with \eqref{Xld2}, \eqref{Xld1},  and \eqref{MF2}
yields the following lemma.
The bound \eqref{MIF2} follows from 
\eqref{MIF1}
and 
\cite[Lemma~3.1\,(i) and Remark~3.2\,(ii)]{CGLLO}.

\begin{lemma} 
\label{LEM:tri2}
Let 
$b \in \R$ and 
$\lambda \geq 1$. Given $\rho \geq \frac 12$, $\frac 12 < \gamma < 1$,  and $T > 0$, let $w$ be $(\rho, \gamma)$-irregular on $[0, T]$.
 Then, for any $s < 0$, satisfying \eqref{reg1}, 
 and $N \ge 1$, 
  the bilinear driver $Y^{\lambda}$ in~\eqref{Xld2}  satisfies
\begin{align} 
\label{MIF1}
    \| Y_{t, r}^{\lambda} \|_{\mathcal{L}_2 (L^2 (\T_{\lambda}))} \les \lambda^{-\frac 32 + b(1- \gamma)} \| \Phi^w \|_{\mathcal{W}^{\rho, \gamma}_{T}} |t - r|^{\gamma} 
\end{align} 

\noi 
for any $0 \leq r < t \leq \lambda^{b} T$. 
As a consequence, we have 
\begin{align} 
\label{MIF2}
    \| Y^{\lambda} \|_{\mathcal{X}_2^{0, \gamma} ([0, \lambda^b T] \times \T_{\lambda})}
\les \ld^{-\frac 32 + b(1- \g)} \| \Phi^{w} \|_{\mathcal{W}^{\rho, \gamma}_{T}}, 
\end{align}

\noi
where  $\mathcal{X}_2^{0, \gamma} ([0, \lambda^b T] \times \T_{\lambda})$ 
is as in \eqref{X1}.

\end{lemma}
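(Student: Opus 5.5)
The plan is to reduce \eqref{MIF1} to the same multilinear Fourier estimate used for $X^\ld$ in Lemma \ref{LEM:tri1}, with the weights $|n|^s$ replaced by the $I$-multiplier. By \eqref{Xld2} and \eqref{bd0a}, the driver has Fourier coefficients
\begin{align*}
\F_{\T_\ld}\big(Y^\ld_{t,r}(f_1,f_2)\big)(n)
= \frac{in}{\ld}\sum_{\substack{n_1,n_2\in\Z^*_\ld\\ n=n_1+n_2}}
\frac{m_{s,N}(n)}{m_{s,N}(n_1)m_{s,N}(n_2)}\,
\Phi^{w^\ld}_{t,r}(\Xi_\KDV(\bar n))\,\ft f_1(n_1)\ft f_2(n_2).
\end{align*}
Arguing as in \eqref{MF1} (Cauchy--Schwarz in $n_1$ followed by a Schur-type bound), we obtain
\begin{align*}
\|Y^\ld_{t,r}\|_{\L_2(L^2(\T_\ld))}\le \ld^{-\frac12}\bigg(\sup_{n_1\in\Z^*_\ld}\sum_{\substack{n,n_2\in\Z^*_\ld\\ n_1=n-n_2}}|n|^2\frac{m(n)^2}{m(n_1)^2m(n_2)^2}\,\big|\Phi^{w^\ld}_{t,r}(\Xi_\KDV(\bar n))\big|^2\bigg)^{\frac12}.
\end{align*}
Next we insert \eqref{MF2}. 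This is where the new scaling enters: it yields the factor $\ld^{b(1-\g)-3\rho}\|\Phi^w\|_{\mathcal W^{\rho,\g}_T}|t-r|^\g|nn_1n_2|^{-\rho}$ in place of the classical $b=3$ factor.

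The key step is a pointwise multiplier bound replacing the one from \cite[Lemma~7.4]{CGLLO}:
\[
\frac{m(n)}{m(n_1)m(n_2)}\les \frac{|n|^{s}}{|n_1|^{s}|n_2|^{s}}\cdot(\text{scale-invariant factor}),
\]
with the right-hand side controlled uniformly in $N$. Since $m$ is even and non-increasing with $m(\xi)\sim \min(1,(|\xi|/N)^s)$, and $s<0$, the ratio $m(n)/(m(n_1)m(n_2))$ is bounded by $\jb{\,\cdot\,}$-type quantities. Concretely, for $|n|\le\max(|n_1|,|n_2|)$ we have $m(n)\le 1$ and $m(n_j)^{-1}\le (|n_j|/N)^{-s}\vee 1$. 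I will then split into the cases $|n_1|,|n_2|\le N$, exactly one of them $>N$, and both $>N$. In each case the summand is dominated by the summand of Lemma \ref{LEM:tri1} with $s$ replaced by some $s'\in[s,0]$, or by $s'=0$ times a factor at most one, and \eqref{reg1} at $s$ implies \eqref{reg1} at $s'\ge s$. Note that $N$ is measured on the $\Z_\ld$ frequency scale, so no extra $\ld$ factor appears. I expect this case analysis to be the main obstacle: the monotonicity argument must be checked to be compatible with the rescaling $m=\ld n$, $m_j=\ld n_j$.

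Then I rescale frequencies as in the proof of Lemma \ref{LEM:tri1}. For the $L^2$ norm (effective $s=0$) the powers of $\ld$ are $\ld^{-\frac12+b(1-\g)-3\rho}\cdot\ld^{-1+3\rho}=\ld^{-\frac32+b(1-\g)}$, and the remaining $\Z^*$ sum is finite by \cite[Proposition~4.1\,(i)]{CGLLO} under \eqref{reg1}. This gives \eqref{MIF1}.

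Finally, \eqref{MIF2} follows from \eqref{MIF1}. Since $Y^\ld$ is bilinear, $\updl Y^\ld=0$ is inherited from the integral form \eqref{Xld1}, and $Y^\ld(0)=DY^\ld[0]=0$. The $\Lip_2$ and derivative norms are then controlled by the $\L_2$ norm via \cite[Lemma~3.1\,(i) and Remark~3.2\,(ii)]{CGLLO}.
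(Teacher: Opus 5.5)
Your proposal is correct and takes the same approach as the paper. The paper obtains \eqref{MIF1} by rerunning the kernel argument of \cite[Lemma~7.4]{CGLLO} with the new phase bound \eqref{MF2}, and then deduces \eqref{MIF2} from \cite[Lemma~3.1\,(i) and Remark~3.2\,(ii)]{CGLLO}, which is exactly your route; the case analysis you flag as the main obstacle does close uniformly in $N\ge 1$ and $\ld\ge 1$: the ratio $m(n)/(m(n_1)m(n_2))$ is $\les_s 1$ unless $|n_1|,|n_2|>N$, in which case it is $\les_s |n|^s|n_1|^{-s}|n_2|^{-s}$, so the summand is dominated by the sum of the $s'=0$ and $s'=s$ summands of Lemma~\ref{LEM:tri1}, and the extra factor $\ld^{s}\le 1$ produced when you rescale the $s'=s$ piece only helps.
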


As a corollary to Lemma
\ref{LEM:tri2}
and Proposition \ref{PROP:main}, 
we have the following local well-posedness in $L^2(\T_\ld)$ of the scaled modulated $I$-KdV 
\eqref{kdv5}.
See \cite[Corollary 7.5]{CGLLO}.

\begin{corollary}
\label{COR:I2}
Let 
$b \in \R$ and 
$\ld \ge 1$.
Given $\rho \ge\frac 12$,  $\frac 12 < \g < 1$,  and $T> 0$, 
let  $w$ be $(\rho,\g)$-irregular
on $[0, T]$. 
Fix  $s < 0$,  satisfying \eqref{reg1}, 
and $N \ge 1$.
Then, given any 
$v_0 \in L^2(\T_\ld)$ and  $t_0 \in [0, \ld^b T]$, 
there exist $C_0 > 0$, $\ta > 0$, both independent of $v_0$ and $t_0$, 
and a unique solution $Iu^{\ld}$
to 
the scaled modulated $I$-KdV equation~\eqref{kdv5} 
with $I u^\ld (t_0)= v_0$
on the time interval $[t_0, t_0 + \tau]\cap [0, \ld^b T]$, 
where the local existence time 
$\tau = \tau (\|v_0\|_{L^2 (\T_{\lambda})}, \|Y^{\lambda}\|_{\mathcal{X}_2^{0, \gamma} ([0, \lambda^b T] \times \T_{\lambda})}) > 0$ satisfies
\begin{align}
\tau  \ge C_0 \|Y^\ld\|_{\cX^{0,\g}_2([0, \ld^b T]\times \T_\ld)}^{-\ta} \big( 1 + \| v_0 \|_{L^2(\T_\ld)}\big)^{-\ta} .
\label{ME0a} 
\end{align}

\noi 
Moreover, given any $0 < \al < \gamma$ with  $\alpha + \gamma > 1$, 
there exists $C_\al > 0$ such that 
\begin{align*}
 \| I\uu^\ld \|_{\CC^\al ([t_0, t_0 + \tau] \cap [0, \ld^b T]; L^2(\T_\ld))}\le 
 C_\al
 \|v_0\|_{L^2(\T_\ld)}, 
\end{align*}

\noi
where $\uu^\ld$ denotes the modulated interaction representation of $u^\ld$
defined in  \eqref{ME0}.

\end{corollary}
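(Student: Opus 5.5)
The plan is to deduce Corollary \ref{COR:I2} directly from Proposition \ref{PROP:main}, applied to the nonlinear YDE associated with the $I$-system \eqref{kdv5}, with the driver $Y^\ld$ from \eqref{Xld2}.

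\emph{Step 1: reformulation.} Set $\vv = I\uu^\ld$. Since $I$ is a Fourier multiplier, it commutes with $U^{w^\ld}(t)$, so $\vv$ is the modulated interaction representation of $Iu^\ld$. Applying $I$ to the scaled YDE $\uu^\ld = u_0^\ld + \I^{X^\ld}(\uu^\ld)$ and using \eqref{Xld2}, the problem \eqref{kdv5} becomes the nonlinear YDE
\[
\vv = v_0 + \I^{Y^\ld}(\vv)
\]
on $L^2(\T_\ld)$. This uses the observation that $I \circ X^\ld_{t,r}(\uu_\bullet) = Y^\ld_{t,r}(I\uu_\bullet)$ at the level of the germs $\Theta$. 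Passing to the Riemann--Stieltjes sums \eqref{YQ6} and using the boundedness of $I$, the identity $I\,\I^{X^\ld}(\uu) = \I^{Y^\ld}(I\uu)$ should follow. The operator $I^{-1}$ is also bounded on each frequency-localized piece, with $I: \dot H^s \to L^2$ an isomorphism up to constants depending on $N$ and $\ld$ (see \eqref{I1}). Hence the solution concepts for $u^\ld$ in $\dot H^s$ and for $Iu^\ld$ in $L^2$ are equivalent.

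\emph{Step 2: the driver.} By Lemma \ref{LEM:tri2}, specifically \eqref{MIF2}, $Y^\ld$ belongs to $\cX^{0,\g}_2([0,\ld^bT]\times\T_\ld)$. The remaining properties are inherited from $X^\ld$:
\begin{itemize}
\item $\updl Y^\ld = 0$, since $Y^\ld$ is an integral over $[r,t]$;
\item $Y^\ld(0) = DY^\ld[0] = 0$, by bilinearity.
\end{itemize}

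\emph{Step 3: shift of the initial time.} To start at $t_0$ instead of $0$, translate time. Define $\wt Y_{t,r} = Y^\ld_{t_0+t,\,t_0+r}$ for $0 \le r < t \le \ld^bT - t_0$. This translated driver has norm bounded by that of $Y^\ld$, since the Hölder seminorm over a subinterval is no larger. Proposition \ref{PROP:main} then gives a unique solution on $[0,\tau]$, with $\tau$ satisfying \eqref{YD1}. Since $\|\wt Y\| \le \|Y^\ld\|$ and $\tau$ is monotone in this norm, this yields \eqref{ME0a} with $C_0$ and $\ta$ independent of $v_0$ and $t_0$; one can take $\ta = (\g-\al)^{-1}$ by Remark \ref{REM:theta}.

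\emph{Step 4: a priori bound.} The $\CC^\al$ bound is the last assertion of Proposition \ref{PROP:main}, transported back to $[t_0, t_0+\tau] \cap [0, \ld^bT]$.

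\emph{Main obstacle.} The delicate point is Step 1: justifying that $I$ commutes with the nonlinear Young integral, so that solutions of the two YDEs correspond. I would argue at the germ level as indicated above, mirroring \cite[Corollary 7.5]{CGLLO}. Everything else is bookkeeping.
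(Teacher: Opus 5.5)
Your proposal is correct and follows the paper's route: the corollary is Proposition \ref{PROP:main} applied to the YDE for $I\uu^\ld$ with driver $Y^\ld$, whose membership in $\cX^{0,\g}_2([0,\ld^bT]\times\T_\ld)$ is exactly \eqref{MIF2} from Lemma \ref{LEM:tri2} (cf.\ \cite[Corollary 7.5]{CGLLO}). Your time translation and the germ identity $IX^\ld_{t,r}(\uu_r)=Y^\ld_{t,r}(I\uu_r)$ are bookkeeping the paper leaves implicit. One cosmetic slip: the datum of the interaction representation at $t_0$ is $U^{w^\ld}(t_0)^{-1}v_0$ rather than $v_0$, but it has the same $L^2$ norm, so nothing changes.
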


\begin{remark}\rm \label{REM:tau}

Recall that, given a target time $T \gg1$, 
we choose $\ld = \ld(T) \gg1$ (which tends to $\infty$
as $T\to \infty$) in the application of the $I$-method.
When $b = 3$ considered in \cite{CGLLO}, 
the exponent of $\ld$ in \eqref{MIF2} is $\frac 32 - 3\g$, 
which is negative for $\frac  12 < \g < 1$.
Namely, there is no loss  in \eqref{MIF2} coming from $\ld \gg1$.

Our main goal in this paper is 
to show that, given $s < 0$ with $|s| \gg 1$
and $\frac 12 < \g < 1$, there exists $\rho \gg 1$
such that, given a $(\rho, \g)$-irregular modulation function $w$, 
the modulated KdV~\eqref{kdv1} on $\T$ is globally well-posed in $H^s(\T)$.
Then, in view of \eqref{scaling5}, 
we need to take $b \gg 1$ when $|s| \gg 1$.
For such $b \gg 1$, 
the exponent of $\ld$ in \eqref{MIF2} is 
positive since $\frac 12 < \g < 1$.
Namely, there is a loss of the positive power of $\ld$ in \eqref{MIF2}, 
which requires additional care as compared to the situation in \cite{CGLLO}.

For simplicity of the presentation, 
we only consider the case:
\begin{align}
 -    \frac {3}{2} + b (1 - \gamma) >  0,  \quad \text{namely,} \quad  b > \frac {3}{2 (1 - \g)}
\label{bg} 
\end{align}

\noi
in the proof of Theorem \ref{THM:main} presented in Subsection \ref{SUBSEC:I4}.
The loss $\ld^{-\frac 32 + b(1- \g)}$
in \eqref{MIF2} forces 
the local existence time $\tau$ of the scaled modulated $I$-KdV~\eqref{kdv5}
on $\T_\ld$ in Corollary \ref{COR:I2} 
to shrink to $0$
as $\ld \to \infty$.
More precisely, given $\al \in (1- \g, \g)$, 
it follows from \eqref{YD4} in 
Remark~\ref{REM:theta}\,(i)
that we can choose $\ta$ in \eqref{ME0a} as 
\begin{align}
\ta = \frac 1{\g- \al}.
\label{A1}
\end{align}

\noi
Then, 
from \eqref{MIF2} and \eqref{ME0a}, we see that, given $\ld \gg 1$
(with $\| v_0 \|_{L^2(\T_\ld)} = O(1)$),  
the local existence time $\tau$
in Corollary~\ref{COR:I2}
 is given by 
\begin{align}
\tau \sim \ld^{\left(\frac 32 - b(1- \g)\right)\frac 1{\g - \al}}\too  0, 
\label{A2}
\end{align}

\noi
as $\ld \to \infty$, 
thus requiring more iteration steps in order to reach the target time $\ld^b T$
for the scaled modulated $I$-KdV \eqref{kdv5}; see \eqref{iteration}.
Lastly, we point out that
in maximizing the local existence time $\tau$ in \eqref{A2}, 
we choose 
\begin{align}
\al = 1 - \g+ \eps
\label{A3}
\end{align}

\noi
for sufficiently small $\eps > 0$.

\end{remark}

\subsection{Commutator  estimate} \label{SUBSEC:I3}

Define the (scaled) modified energy: 
\begin{align} 
\label{ME1}
    \| I u^{\lambda} (t) \|^2_{L^{2} (\T_{\lambda})} =  \|I\uu^{\ld}(t)\|^2_{L^2(\T_{\ld})}, 
\end{align}  

\noi
where $\mathbf{u}^{\lambda}$ is the modulated interaction representation of $u^{\lambda}$ defined in \eqref{ME0}.
From the fact that $u(t)$ has spatial mean $0$, \eqref{scaling1}, 
\eqref{scaling3},  and \eqref{I1}, we have
\begin{align}
\begin{split}
\|u(t) \|_{H^s(\T)} 
& \sim \|u(t) \|_{\dot H^s(\T)}
  = 
\ld^{b-\frac 32 + s}\|u^\ld (\ld^{b} t) \|_{\dot H^s(\T_\ld)}\\
& \les
\ld^{b - \frac 32}
 \|I\uu^{\ld}(\ld^{b}t)\|_{L^2(\T_{\ld})}.
\end{split}
\label{BA2}
\end{align}

\noi
Fix a target time $T \gg 1$.
In order to guarantee existence
of a solution $u$ to the modulated KdV \eqref{kdv1}
on $[0, T] \times \T$, 
it follows from 
 the blowup alternative \eqref{BA1}
 and \eqref{BA2}
that  
it suffices to show that the modified energy 
$ \|I\uu^{\ld}(t)\|^2_{L^2(\T_{\ld})} $ in \eqref{ME1}
remains finite on the time interval $[0, \ld^b T]$.

Recall from \cite[(7.29)]{CGLLO} that 
\begin{align} 
\begin{split}
& \| I \uu^{\lambda} (t) \|^2_{L^2 (\T_{\lambda})} -  \| I \mathbf{u}^{\lambda} (r) \|^2_{L^2 (\T_{\lambda})} \\
& \quad =  2 \big\langle I \mathbf{u}^{\lambda} (r), \com_{t, r} (\mathbf{u}^{\lambda} (r), \mathbf{u}^{\lambda} (r)) \big\rangle_{L^2 (\T_{\lambda})} + R_{t, r}^{\lambda} 
\end{split}
\label{IDiff1} 
\end{align}  

\noi 
for any $t > r \geq 0$, where the commutator $\com_{t, r}$ and the remainder $R_{t, r}^{\lambda}$ are given by 
\begin{align}
\label{IDiff2}   
\begin{split}
    \com_{t, r} (f_1, f_2) & = I X^{\lambda}_{t, r} (f_1, f_2) - X^{\lambda}_{t, r} (I f_1, I f_2), \\ 
    R^{\lambda}_{t, r} & = \| I \mathbf{u}^{\lambda} (t) - I \mathbf{u}^{\lambda} (r) \|^2_{L^2 (\T_{\lambda})} \\ 
    & \quad + 2 \big\langle I \mathbf{u}^{\lambda} (r), I \mathbf{u}^{\lambda} (t) - I \mathbf{u}^{\lambda} (r) - I X^{\lambda}_{t, r} (\mathbf{u}^{\lambda} (r), \mathbf{u}^{\lambda} (r)) \big\rangle_{L^2 (\T_{\lambda})}. 
\end{split}
\end{align}

\noi
We now state
 a key commutator estimate,
extending
\cite[Proposition 7.6]{CGLLO}
(for $b = 3$) 
to the case of general $b \in \R$.
Its proof follows from a straightforward modification of
the proof of 
\cite[Proposition 7.6]{CGLLO}, 
using \eqref{MF2}, 
and thus we omit details.

\begin{proposition} 
\label{PROP:com}
Let $b \in \R$ and $\lambda \geq 1$. Given $\rho > \frac 12$, $\frac 12 < \gamma <1$, and $T > 0$, let $w$ be $(\rho, \gamma)$-irregular on $[0, T]$, and let $X^{\lambda}$ be as in \eqref{Xld1}. Given $s < 0$, satisfying \eqref{reg1}, and $N \ge 1$. Then, we have  
\begin{align} 
\label{com1}
\begin{split}
     \| \com_{t, r} (f_1, f_2) \|_{L^2 (\T_{\lambda})} 
& \les K_b (\lambda, N) \| \Phi^w \|_{\mathcal{W}^{\rho, \gamma}_{T}} |t - r|^{\gamma} \\
& \quad \times \| I f_1 \|_{L^2 (\T_{\lambda})} \| I f_2 \|_{L^2 (\T_{\lambda})}
\end{split}
\end{align}  

\noi 
for any $0 \leq r <t \leq \lambda^{b} T$, 
where $K_b ( \lambda, N)$ is given by 
\begin{align} 
\label{com2}
K_b (\lambda, N) = \begin{cases}
\lambda^{b - 3\rho - b \gamma} N^{\frac 32 - 3\rho}, & \textit{if } \frac 12 < \rho < \frac 32, \\ 
\lambda^{b - \frac 92 - b \gamma} N^{-3} (\log N + \log \lambda)^{\frac 12}, & \text{if } \rho = \frac 32,  \\ 
\lambda^{b - \frac 32 -2\rho - b \gamma } N^{-2 \rho}, & \text{if } \rho > \frac 32. 
\end{cases}
\end{align} 
\end{proposition}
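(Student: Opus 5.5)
The plan is to follow the proof of \cite[Proposition 7.6]{CGLLO}, changing only the bound on the oscillatory factor $\Phi^{w^\ld}_{t,r}$, which we take from \eqref{MF2}. On the Fourier side, \eqref{bd0a} and \eqref{Xld2} give
\begin{align*}
\F_{\T_\ld}\big(\com_{t,r}(f_1,f_2)\big)(n)
= \frac{in}{\ld}\sum_{\substack{n_1,n_2\in\Z^*_\ld\\ n=n_1+n_2}}
\Phi^{w^\ld}_{t,r}(\Xi_\KDV(\bar n))\,
\frac{m(n)-m(n_1)m(n_2)}{m(n_1)m(n_2)}\,
\ft{If_1}(n_1)\ft{If_2}(n_2),
\end{align*}
where $m = m_{s,N}$. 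As in \eqref{MF1}, we apply Cauchy--Schwarz in $n_2$ and then sum in $n$. This reduces \eqref{com1} to bounding
\begin{align*}
\ld^{-\frac12}\bigg(\sup_{n_1\in\Z^*_\ld}\sum_{\substack{n,n_2\in\Z^*_\ld\\ n_1=n-n_2}}|n|^2\,\big|\Phi^{w^\ld}_{t,r}(\Xi_\KDV(\bar n))\big|^2\,
\Big|\frac{m(n)-m(n_1)m(n_2)}{m(n_1)m(n_2)}\Big|^2\bigg)^{\frac12}.
\end{align*}
We insert \eqref{MF2}, using $\jb{\ld^3 nn_1n_2}\sim\ld^3|nn_1n_2|$, which holds because $|n_j|\ge\ld^{-1}$. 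This pulls out the factor $\ld^{b(1-\g)-3\rho}\|\Phi^w\|_{\mathcal W^{\rho,\g}_T}|t-r|^\g$, and what remains is
\begin{align*}
\ld^{-\frac12}\sup_{n_1}\Big(\sum |n|^{2-2\rho}|n_1n_2|^{-2\rho}\,|M(\bar n)|^2\Big)^{\frac12},
\qquad M(\bar n) = \frac{m(n)-m(n_1)m(n_2)}{m(n_1)m(n_2)}.
\end{align*}

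Next we rescale to integer frequencies, $m_j=\ld n_j$ and $m=\ld n$. Then $m_{s,N}(n)=m_{s,\ld N}(\ld n)$, and the power counting gives an extra factor $\ld^{-(2-6\rho)/2}\cdot\ld^{-1/2}$ from the weights and the change of the summation lattice. The problem becomes exactly the $\ld=1$ multilinear sum of \cite[Proposition 7.6]{CGLLO}, with cutoff $\ld N$ in place of $N$. The only input from that proof we need is its bound for this sum in terms of the cutoff $N'=\ld N$:
\begin{itemize}
\item $(N')^{\frac32-3\rho}$ for $\frac12<\rho<\frac32$;
\item $(N')^{-3}(\log N')^{\frac12}$ for $\rho=\frac32$;
\item $(N')^{-2\rho}$ for $\rho>\frac32$.
\end{itemize}
That bound is proved by splitting into the cases where all frequencies are below $N'$ (where $M=0$), and by using the mean value theorem on $M$ in the high--low interactions. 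Recombining the powers of $\ld$ should reproduce \eqref{com2}. For example, for $\frac12<\rho<\frac32$ we get $\ld^{b-b\g-3\rho}\cdot\ld^{\frac32-3\rho}\cdot(\text{scaling factor})N^{\frac32-3\rho}$, and the net power of $\ld$ should collapse to $b-3\rho-b\g$. The restriction \eqref{reg1} on $s$ enters only through that case analysis, exactly as in \cite{CGLLO}.

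The main obstacle is the careful bookkeeping of $\ld$-powers, so that the three regimes in \eqref{com2} come out exactly. To settle it, we first check the case $b=3$ against \cite[Proposition 7.6]{CGLLO}, where the $\ld$-exponents must be $3-3\rho-3\g$, $-\frac32-3\g$, and $\frac32-2\rho-3\g$. The general $b$ then differs from $b=3$ only through the factor $\ld^{b(1-\g)}$ in \eqref{MF2} replacing $\ld^{3(1-\g)}$; this replacement is uniform in frequency. Hence $K_b(\ld,N)=\ld^{(b-3)(1-\g)}K_3(\ld,N)$, and this matches \eqref{com2} in all three cases.
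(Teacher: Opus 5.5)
Your proposal is correct and is essentially the paper's argument: the paper only says that the proof is a straightforward modification of \cite[Proposition 7.6]{CGLLO} using \eqref{MF2}, and your closing observation is exactly that modification. Namely, $b$ enters only through the frequency-independent factor $\ld^{b(1-\g)}$ in \eqref{MF2}, so $K_b(\ld,N)=\ld^{(b-3)(1-\g)}K_3(\ld,N)$, which reproduces all three cases of \eqref{com2}. Rely on that closing paragraph rather than on your rescaled-sum sketch, because the sketch taken literally (with $\sup_{n_1}$ over all $n_1$) does not obey your listed bounds once $\rho>1$: the region $|\ld n_1|\sim 1$, $|\ld n_2|\gtrsim \ld N$ alone already gives $(\ld N)^{\frac12-2\rho}$, and fixing this would first require using the symmetry of $\com_{t,r}$ to restrict to $|n_1|\ge|n_2|$.
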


As a corollary to
Proposition \ref{PROP:com} and the sewing lemma (Lemma \ref{LEM:sew}), 
we have the following bound on  the remainder term $R_{t, r}^{\ld}$ in  \eqref{IDiff2}; 
see \cite[(7.50)-(7.52)]{CGLLO}. 
Fix an interval $J \subset [0, \lambda^b T]$ and $0 < \alpha < \gamma$ with $\alpha + \gamma > 1$. Then, we have
\begin{align}
\label{R1} 
    \big| R^{\lambda}_{t_1, t_2} \big| 
\les K_b ( \lambda, N) \| \Phi^{w} \|_{\mathcal{W}_{T}^{\rho, \gamma}} |t_1 - t_2|^{\alpha + \gamma} \| I \mathbf{u}^{\lambda} \|^3_{\mathcal{C}^{\alpha} (J; L^2 (\T_{\lambda}))}
\end{align} 

\noi 
for any $t_1 > t_2 \geq 0$ belonging to the interval $J$, where $K_b ( \ld, N)$ is as in \eqref{com2}.

We note that  
the power of $N$ in \eqref{com2}
for 
$ K_b ( \ld, N)$ appearing in both \eqref{com1} and \eqref{R1}
is negative,\footnote{When $\rho = \frac 12$, this is not true 
and this is the reason that we have $\rho > \frac 12$ for global
well-posedness in Theorems~\ref{THM:old}\,(ii) and \ref{THM:main}, 
while we have $\rho \ge \frac 12$ for local well-posedness
in Theorem~\ref{THM:old}\,(i).} which 
allows us to establish
 almost conservation of the modified energy 
$\| I \uu^\ld(t)\|^2_{L^2(\T_\ld)}$
(by suitably choosing $\ld = \ld(N)\gg1$).

\subsection{Proof of Theorem \ref{THM:main}} \label{SUBSEC:I4}

We conclude this paper by presenting a proof of 
Theorem \ref{THM:main}.

Given $\rho > \frac 12$ and $\frac 12 < \gamma < 1$,  let $w$ be $(\rho, \gamma)$-irregular on $\R_{+}$. 
 Fix $u_0 \in H^s ({\T})$ for $s < 0$ satisfying \eqref{reg1} 
such that Theorem \ref{THM:old}\,(i) guarantees 
local existence of a solution $u$ to the modulated KdV \eqref{kdv1} on $\T$.

Fix a target time $T \gg 1$.
In view of the blowup alternative \eqref{BA1} and \eqref{BA2}, 
Theorem~\ref{THM:main} follows once we show that 
\begin{align*}
    \sup_{0 \leq t \leq \lambda^b T} \| I \mathbf{u}^{\lambda} (t) \|_{L^{2} (\T_{\lambda})} 
\leq C_{s, b} (T, \lambda) < \infty 
\end{align*} 

\noi 
 for a suitable choice of $b > \frac 32 - s$ (see \eqref{scaling5}), 
 $N = N(T) \gg1$,  and $\ld = \ld(T) \geq 1$.
For simplicity of notation, we often suppress dependence
on $s$ and $b$ (and also on $\rho$, $\g$, and $w$) in the following.

From \eqref{I1} and \eqref{scaling3}, we have 
\begin{align*} 
    \| I u_0^{\lambda} \|_{L^2 (\T_{\lambda})} \les \lambda^{-b + \frac 32 - s} N^{-s} \| u_0 \|_{H^s (\T)}. 
\end{align*} 

\noi 
Then, given small $\eps_0 > 0$, we can choose 
\begin{align} 
\label{IT2}
    \lambda \sim N^{- \frac {2s}{2b + 2s - 3}}, 
\end{align} 

\noi 
such that 
\begin{align} 
\label{IT3}
    \| I u_0^{\lambda} \|_{L^2 (\T_{\lambda})}^2 \leq \eps_0,
\end{align}  

\noi 
provided that $b>\frac{3}{2}-s$. Here, the implicit constant in \eqref{IT2} depends only on $\| u_0 \|_{H^s (\T)}$
and the exponent of $N$ in \eqref{IT2} is positive for $s < 0$ and $b > \frac 32 - s$.

Fix  $ \al \in (1- \g, \g)$
(to be determined later). 
Let $0 < \tau \le 1$  be the local existence time for the scaled modulated $I$-KdV \eqref{kdv5},  
 corresponding to initial data $v_0$ with $\| v_0 \|^2_{L^2 (\T_{\lambda})} =  2 \eps_0$;
see \eqref{A2}.
 More precisely, if we have 
\begin{align} 
\label{Ia} 
    \| I \mathbf{u}^{\lambda} (t_0) \|^2_{L^2 (\T_{\lambda})} \leq 2 \eps_0
\end{align} 

\noi 
for some $0 \leq t_0 \leq \lambda^b T$, then Corollary \ref{COR:I2} 
(see \eqref{A2}) guarantees that the solution $I u^{\lambda}$ to the scaled modulated $I$-KdV \eqref{kdv5} exists on the time interval $[t_0, t_0 + \tau] \cap [0, \lambda^b T]$, satisfying the bound: 
\begin{align}
\label{Ib} 
\| I \mathbf{u}^{\lambda} \|^2_{\mathcal{C}^{\alpha} ([t_0, t_0 + \tau] \cap [0, \lambda^b T]; L^2 (\T_{\lambda}))} \leq C_\al \eps_0. 
\end{align}  

\noi 
Note that  \eqref{IT3} states
that \eqref{Ia} is satisfied with $t_0 = 0$, 
and thus 
 the solution $I u^{\ld}$ exists on $[0, \tau]$.

As declared in 
Remark \ref{REM:tau}
(see \eqref{bg}),
we assume that 
\begin{align}
b > b_*(s, \g) : = \max\bigg(\frac {3}{2} - s, \, \frac {3}{2 (1 - \g)}\bigg)
\label{bg2}
\end{align}

\noi
in the following.
From \eqref{IDiff1},  \eqref{IT3}, Proposition \ref{PROP:com}, \eqref{R1}, and \eqref{Ib} with $t_0 = 0$, we have  
\begin{align} 
\label{In2}
\begin{split}
&\| I \mathbf{u}^{\lambda} (t) \|^2_{L^2 (\T_{\lambda})} \\
&\quad \le \| I \uu_0^{\lambda} \|_{L^2 (\T_{\lambda})}^2+2\| I \uu_0^{\ld} \|_{L^{2} (\T_{\ld})} \| \com_{t, 0} (\uu_0^{\ld}, \uu_0^{\ld}) \|_{L^2 (\T_{\ld})} + |R^{\ld}_{t,0}| \\
&\quad \le \eps_0+C 
\tau^\g K_b (\lambda, N)
\| \Phi^{w} \|_{\mathcal{W}_{T}^{\rho, \gamma}}
\end{split}
\end{align}  

\noi
for any $0 \le t \le \tau$, 
where $K_b (\lambda, N)$ is as in \eqref{com2}.

Suppose that 
by choosing $N \gg 1$ (and thus $\ld = \ld(N) \gg 1$ in view of \eqref{IT2}), 
we can make 
$\tau ^\g K_b (\lambda,  N)$ 
as small as we need, where 
 $\tau$ is as in \eqref{A2}.
 Then, from \eqref{In2}, we have
\begin{align*} 
\sup_{0 \leq t \leq \tau} \| I \mathbf{u}^{\lambda} (t) \|^2_{L^2 (\T_{\lambda})} 
 \leq 2 \eps_0. 
\end{align*} 

\noi 
 As a consequence, the solution $I u^{\ld}$ exists on $[\tau, 2 \tau]$ satisfying the bounds \eqref{Ia} and \eqref{Ib} with $t_0 = \tau$, which allows us to iterate this process.

 After $j$ steps, 
 we see that  the solution $I u^{\lambda}$ exists on $[0, j \tau]\cap [0, \ld^b T]$, satisfying \eqref{Ia} and \eqref{Ib} with $t_0 = (j-1) \tau$.
 Furthermore, we have 
\begin{align*}
    \sup_{0 \leq t \leq j \tau} \| I \mathbf{u}^{\lambda} (t) \|^2_{L^2 (\T_{\lambda})} 
& \leq  \eps_0 + j C \tau ^\g 
     K_b (\lambda, N) \| \Phi^w \|_{\mathcal{W}^{\rho, \gamma}_{T}}\\
& \le 2\eps_0, 
\end{align*}

\noi
where the last inequality holds 
for $j = 1, \dots, \big[\frac{\ld^b T}{\tau}\big]+1$, 
if 
\begin{align}
\label{iteration}
j \tau ^\g 
     K_b (\lambda, N)
\les \frac {\lambda^{b} T}{\tau}
\tau ^\g 
     K_b (\lambda, N)
\ll 1.
\end{align} 

\noi 
It follows from   \eqref{A2}
that the second inequality in \eqref{iteration} holds 
if we can choose  $N = N(T) \gg1 $ and $\ld = \ld(T) \gg1 $, 
satisfying \eqref{IT2},  such that 
\begin{align} 
\label{CON1}
\lambda^{b - (1-\g)\left( \frac 32 - b (1 - \gamma) \right) \frac 1{\g - \al}}  K_b( \lambda, N)
= \eps_1  
\end{align}

\noi
for some small $\eps_1 > 0$.

Fix  $\rho > \frac 12 $  
and $\frac 12 < \g < 1$.
Our task is 
to maximize the range of 
 $s < 0$
 which 
 satisfies~\eqref{reg1}
 (for local well-posedness) 
 and also 
 guarantees~\eqref{CON1} under \eqref{IT2}
 by making a  suitable choice
 of {\it independent} parameters 
 $\al \in (1 - \g, \g)$ and  
$b>b_*(s, \g)$, 
where $b_*(s, \g)$ is as in \eqref{bg2}.
In view of \eqref{bg}
and \eqref{com2}
(which says that $K_b(\ld, N)$ is independent of $\al$), 
we need to choose $\al$ which maximizes $\g - \al$.
Namely, we set $\al = 1 - \g + \eps$
for sufficiently small $\eps > 0$
as in~\eqref{A3}.

The following two lemmas show that, 
given $\rho > \frac 12 $,  
$\frac 12 < \g < 1$,  and $s < 0$, 
satisfying the hypothesis of Theorem \ref{THM:main}, 
by choosing
 $\al = 1 - \g + \eps$
for sufficiently small $\eps > 0$
as in~\eqref{A3}, 
 there exists
$b>b_*(s, \g)$, 
where $b_*(s, \g)$ is as in \eqref{bg2}, 
such that \eqref{CON1} 
under \eqref{IT2}
holds for any sufficiently large $N\gg 1$. 
This proves 
 Theorem \ref{THM:main}.

The first  lemma treats the case $\rho \ge \frac 32$.

\begin{lemma}\label{LEM:COND1}
Suppose that  $\rho\geq\frac{3}{2}$, $\frac{1}{2}<\gamma<1$, 
and  $s<0$ satisfy one of the following\textup{:} 
\begin{equation} 
\begin{split}
\textup{(i)} &\ \ \tfrac{1}{2}<\gamma <  \tfrac{\sqrt{5}-1}{2}, \quad  \tfrac {3}{2} \le \rho < \tfrac {3 (1 - \gamma)^2}{2 (-\gamma^2 - \gamma +1)}, \quad \text{and} \quad s \geq - \rho, \\ 
\textup{(ii)} &\ \ \tfrac{1}{2}<\gamma <  \tfrac{\sqrt{5}-1}{2}, \quad \rho \geq \tfrac{3 (1 - \gamma)^2}{2 (- \gamma^2 - \gamma + 1)}, \quad \text{and} \quad s > - \tfrac{4\rho (2 \gamma - 1) + 3 (1 - \gamma)^2}{2 (-\gamma^2 + 3 \gamma - 1)}, \\
\textup{(iii)} &\ \ \tfrac{\sqrt{5}-1}{2} \le \gamma < 1, \quad \rho \geq \tfrac 32, \quad \text{and} \quad s \ge -\rho.
\end{split}
\label{cond1}
\end{equation}

\noi
Then, $\rho$, $\g$,  and $s$ satisfy \eqref{reg1}.
Moreover, 
by choosing
 $\al = 1 - \g + \eps$
for sufficiently small $\eps > 0$
as in~\eqref{A3}, 
 there exists
$b>b_*(s, \g)$, where $b_*(s, \g)$ is as in \eqref{bg2},  such that \eqref{CON1} 
under \eqref{IT2}
holds for any sufficiently large $N\gg 1$. 

\end{lemma}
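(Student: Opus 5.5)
The plan is to reduce \eqref{CON1} under \eqref{IT2} to a single affine inequality in $b$, and then to choose $b$ case by case. First I check \eqref{reg1}. Since $\rho\ge\frac32>\frac34$, only (i.b) of \eqref{reg1} is relevant, i.e.\ $s\ge-\rho$. In cases (i) and (iii) of \eqref{cond1} this is assumed. In case (ii), the inequality
\[
-\tfrac{4\rho(2\g-1)+3(1-\g)^2}{2(-\g^2+3\g-1)}\ge-\rho
\]
is equivalent, after clearing the positive denominator, to $3(1-\g)^2\le 2\rho(-\g^2-\g+1)$. This is exactly the lower bound on $\rho$ in (ii), because $-\g^2-\g+1>0$ for $\g<\frac{\sqrt5-1}2$. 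Hence \eqref{reg1} holds in all cases.

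Next I make \eqref{CON1} explicit. I take $\al=1-\g+\eps$, so that $\g-\al=2\g-1-\eps$, and use \eqref{com2}:
\begin{itemize}
\item for $\rho>\frac32$, $K_b=\ld^{b-\frac32-2\rho-b\g}N^{-2\rho}$;
\item for $\rho=\frac32$ the same formula holds up to a logarithmic factor, which is harmless because all the inequalities below will be strict.
\end{itemize}
The total power of $\ld$ in \eqref{CON1} is then
\[
A_\eps(b)=2b-b\g-\tfrac{(1-\g)(\frac32-b(1-\g))}{2\g-1-\eps}-\tfrac32-2\rho .
\]
By \eqref{IT2}, $\ld\sim N^{p}$ with $p=\frac{-2s}{2b+2s-3}>0$, so the left side of \eqref{CON1} is $\sim N^{pA_\eps(b)-2\rho}$ (times logs). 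It therefore suffices to have $pA_\eps(b)-2\rho<0$: then the left side tends to $0$ as $N\to\infty$, and we may pick $N\gg1$, with $\ld$ given by \eqref{IT2}, so that it equals (or is bounded by) $\eps_1$. Since $2b+2s-3>0$, the condition is equivalent to
\[
F_\eps(b):=-2s\,A_\eps(b)-2\rho(2b+2s-3)<0 .
\]
At $\eps=0$, $F_0$ is affine in $b$. Its slope is $-2sc-4\rho$, where, after simplification, $c=\frac{-\g^2+3\g-1}{2\g-1}>0$. Its intercept involves $\frac{3(1-\g)}{2(2\g-1)}$, $\rho$ and $s$. Since $\eps$ enters continuously, it suffices to find an admissible $b$ with $F_0(b)<0$ strictly and then take $\eps$ small.

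There are two regimes, according to the sign of the slope.
\begin{itemize}
\item If $|s|c<2\rho$, i.e.\ $s>-\frac{2\rho(2\g-1)}{-\g^2+3\g-1}$, the slope is negative and any sufficiently large $b$ works.
\item Otherwise $F_0$ is non-decreasing in $b$. The best choice is then $b\downarrow b_*(s,\g)$ from \eqref{bg2}, and one needs $F_0(b_*)<0$.
\end{itemize}
In the second regime I split further according to which term attains the maximum in $b_*$, namely $\frac32-s$ or $\frac3{2(1-\g)}$. The switch occurs at $s=\frac32-\frac3{2(1-\g)}$.
\begin{itemize}
\item For $b_*=\frac32-s$ one has $2b+2s-3\to0^+$, so the second term of $F_0$ vanishes. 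The condition reduces to $A_0(\frac32-s)<0$ (note $s<0$), which is a linear inequality in $s$ and $\rho$.
\item For $b_*=\frac3{2(1-\g)}$ the middle term of $A_0$ vanishes. The condition becomes a linear inequality in $s$ of the form $s>-\frac{\text{const}}{\text{const}}$.
\end{itemize}

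I expect the main obstacle to be the algebra of this final step: showing that the union of these admissible regions is exactly the set described by (i)--(iii) of \eqref{cond1}. In particular, I must verify that $\frac{3(1-\g)^2}{2(-\g^2-\g+1)}$ is precisely the $\rho$ at which the binding constraint changes from $s\ge-\rho$ (local well-posedness) to the threshold $-\frac{4\rho(2\g-1)+3(1-\g)^2}{2(-\g^2+3\g-1)}$ coming from $F_0(b_*)<0$. I also need that for $\g\ge\frac{\sqrt5-1}2$, where $-\g^2-\g+1\le0$, the scaling constraint is never binding beyond $s\ge-\rho$. My approach is to compute the threshold $s$ produced by each branch explicitly and compare it with $-\rho$. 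I will use the identities $-\g^2+3\g-1=(2\g-1)+(-\g^2+\g)$ and $(2-\g)(2\g-1)+(1-\g)^2=-\g^2+3\g-1$ to bring the expressions into the form displayed in \eqref{cond1}. In each case I will then check that the chosen $b$ indeed exceeds $b_*$, adjusting by a small amount if needed, using the strictness of the inequalities.
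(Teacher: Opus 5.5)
Your proposal is correct and follows essentially the paper's route. Your condition $F_0(b)<0$ is the paper's $2Ab>B$ from \eqref{A5} at $\eps=0$, and your split on the sign of the slope is the paper's split on the sign of $A$. Your two branches of $b_*$ give the thresholds $s>G$ (at $\eps=0$) and $s>-2\rho\gamma$ of \eqref{nn3}, which are then compared with $-\rho$ through $\gamma^2+\gamma-1$ as in \eqref{A6}. The only differences are cosmetic: you freeze $\eps=0$ and recover small $\eps>0$ by continuity instead of carrying $\theta$ throughout, and you verify \eqref{reg1} in case (ii) explicitly where the paper leaves it implicit.
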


The second lemma treats the case $\frac 12 < \rho < \frac 32$.

\begin{lemma} 
\label{LEM:COND2}
Suppose that  $\frac{1}{2} < \rho<\frac{3}{2}$,  $\frac{1}{2}<\gamma<1$, 
and  $s<0$ satisfy one of the following\textup{:} 
\begin{align} 
\label{cond2}
\begin{split}
\textup{(i)} &\ \ \tfrac{1}{2} < \rho \le \tfrac{3 \g}{6 \g - 2} \quad \text{and} \quad s > \big(\tfrac{3}{2}-3\rho\big) \g, \\ 
\textup{(ii)} &\ \  \tfrac{3 \g}{6 \g - 2} < \rho < \tfrac {3}{2} \quad \text{and} \quad s \ge - \rho. 
\end{split}
\end{align} 

\noi
Then, $\rho$, $\g$,  and $s$ satisfy \eqref{reg1}.
Moreover,  
by choosing
 $\al = 1 - \g + \eps$
for sufficiently small $\eps > 0$
as in~\eqref{A3}, 
there exists
$b>b_*(s, \g)$, where $b_*(s, \g)$ is as in \eqref{bg2},  such that \eqref{CON1} 
under \eqref{IT2}
holds for any sufficiently large $N\gg 1$.

\end{lemma}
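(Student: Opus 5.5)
We first check \eqref{reg1}, and then reduce \eqref{CON1} under \eqref{IT2} to the negativity of one exponent of $N$, which we verify by choosing $b$ suitably.

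\textbf{Step 1: \eqref{reg1}.} Since $3\rho>\frac32$, we have $\frac32-3\rho<0$. Hence in case (i) of \eqref{cond2} we get $s>(\frac32-3\rho)\g> \frac32-3\rho$, which is (i.a) of \eqref{reg1} when $\rho\le\frac34$. When $\rho>\frac34$ we need (i.b), i.e.\ $s\ge-\rho$. This follows from $(\frac32-3\rho)\g\ge-\rho$, which is equivalent to $\rho(3\g-1)\le\frac{3\g}2$, that is, $\rho\le\frac{3\g}{6\g-2}$. In case (ii) of \eqref{cond2} we have $\rho>\frac{3\g}{6\g-2}>\frac34$, since $\g<1$, and $s\ge-\rho$ is assumed.

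\textbf{Step 2: the exponent.} Since $\rho<\frac32$, the first line of \eqref{com2} gives $K_b=\ld^{b-3\rho-b\g}N^{\frac32-3\rho}$. Insert this and $\al=1-\g+\eps$ into \eqref{CON1}, and substitute $\ld\sim N^{\ta_b}$ with $\ta_b=\frac{-2s}{2b+2s-3}>0$ from \eqref{IT2}. Then \eqref{CON1} can be achieved for all large $N$, with $\eps_1$ arbitrarily small, as soon as the total power of $N$ is negative:
\begin{align*}
E_\eps(b):=\ta_b\, A_\eps(b)+\tfrac32-3\rho<0,
\qquad
A_\eps(b)= b(2-\g)-3\rho-\tfrac{(1-\g)\left(\frac32-b(1-\g)\right)}{2\g-1-\eps}.
\end{align*}
Because the inequality is strict and $E_\eps$ depends continuously on $\eps$, it suffices to prove $E_0(b)<0$ for some $b>b_*(s,\g)$. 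A short computation gives
\begin{align*}
(2\g-1)A_0(b)=b(-\g^2+3\g-1)-3\rho(2\g-1)-\tfrac32(1-\g),
\end{align*}
which is affine in $b$ with positive slope. Consequently $E_0$ is a linear-fractional function of $b$ on $(b_*,\infty)$, hence monotone there. So it suffices to evaluate its two endpoint limits: $b\to b_*^+$ and $b\to\infty$.

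\textbf{Step 3: the endpoints.} As $b\to\infty$, the condition becomes $s>-\frac{(3\rho-\frac32)(2\g-1)}{-\g^2+3\g-1}$. For the endpoint $b\to b_*^+$ we split according to which term realizes $b_*$ in \eqref{bg2}.
\begin{itemize}
\item If $b_*=\frac3{2(1-\g)}$, the $\tau$-loss term vanishes because $b(1-\g)=\frac32$. Then $A_0=\frac{3(2-\g)}{2(1-\g)}-3\rho$ and $\ta_b=\frac{-2s(1-\g)}{3\g+2s(1-\g)}$. The inequality $E_0<0$ should simplify exactly to $s>(\frac32-3\rho)\g$, which is the threshold in (i).
\item If $b_*=\frac32-s$, then $\ta_b\to1$. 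The condition becomes $A_0(\frac32-s)<3\rho-\frac32$, which is again linear in $s$.
\end{itemize}
We then compare these thresholds with those in \eqref{cond2}. In regime (i) we show that the $b_*$ endpoint gives exactly $s>(\frac32-3\rho)\g$. In regime (ii) we show that the admissible range contains $[-\rho,0)$ with room to spare, so that the LWP constraint $s\ge-\rho$ is the binding one. The threshold $\rho=\frac{3\g}{6\g-2}$ should be precisely where $(\frac32-3\rho)\g=-\rho$, which makes the two regimes match.

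\textbf{Main obstacle.} The hardest part is the bookkeeping in Step 3. We must verify that, for each $(\rho,\g)$ in (i) and (ii), the better of the two endpoints beats the stated threshold. For the closed condition $s\ge-\rho$ in (ii), we must also check that the inequality is strict at $s=-\rho$ itself, because the $\eps$-perturbation needs strict slack. I would first compute $E_0$ at $b=\frac3{2(1-\g)}$ in closed form, since this endpoint is the one producing $(\frac32-3\rho)\g$. Only if that fails to cover regime (ii) would I fall back on the $b\to\infty$ limit.
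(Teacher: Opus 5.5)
Your main line is correct and is essentially the paper's argument, packaged differently. The paper clears the positive denominator $2b+2s-3$ to get the condition $2\wt A b>\wt B$, which is linear in $b$, and splits on the sign of $\wt A$:
\begin{itemize}
\item the case $\wt A>0$ is your $b\to\infty$ endpoint;
\item in the case $\wt A<0$, the requirement $b_*<\wt B/(2\wt A)$ is exactly the condition that $E_\eps<0$ for $b$ just above $b_*$, checked for each of the two entries of $b_*$. This produces the maximum in \eqref{Xnn3}.
\end{itemize}

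Your first bullet is right, and it is the only computation you actually need. At $b=\frac{3}{2(1-\g)}$ the $\ta$-dependent term vanishes. After multiplying $E_0<0$ by $3\g+2s(1-\g)>0$, the $\rho s$ terms cancel and you are left with $-3s+3\g(\frac32-3\rho)<0$, i.e.\ $s>(\frac32-3\rho)\g$. In regime (ii), $\rho>\frac{3\g}{6\g-2}$ gives $-\rho>(\frac32-3\rho)\g$ strictly, so $s=-\rho$ still has strict slack. By continuity, some $b$ slightly above $b_*$ then has $E_0(b)<0$, and your $\eps$-perturbation argument finishes. The $b\to\infty$ limit (the paper's Case 1) is never needed.

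The second bullet of Step 3 is wrong. Since $2b+2s-3=2\big(b-(\frac32-s)\big)$, the point $\frac32-s$ is the pole of $\ta_b$, so $\ta_b\to+\infty$ (not $1$) as $b\downarrow\frac32-s$. The correct endpoint condition is therefore $A_0(\frac32-s)<0$, which is the second entry of the maximum in \eqref{Xnn3} at $\eps=0$. It is not $A_0(\frac32-s)<3\rho-\frac32$.

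This error does not break your proof, because that branch never occurs for the $s$ in \eqref{cond2}:
\begin{itemize}
\item in regime (i), $s>(\frac32-3\rho)\g>-3\g$ since $\rho<\frac32$;
\item in regime (ii), $s\ge-\rho>-\frac32>-3\g$ since $\g>\frac12$;
\item in both cases $-3\g>-\frac{3\g}{2(1-\g)}$, and $s>-\frac{3\g}{2(1-\g)}$ is equivalent to $\frac32-s<\frac{3}{2(1-\g)}$.
\end{itemize}
You should state this check explicitly, because your first bullet depends on it. It is what gives $b_*=\frac{3}{2(1-\g)}$ and places the pole strictly to the left of $b_*$, so that $E_0$ is continuous there. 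It is also what makes the factor $3\g+2s(1-\g)$ positive.
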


It remains to prove 
 Lemmas \ref{LEM:COND1} and \ref{LEM:COND2}.
We first present a proof of
 Lemma \ref{LEM:COND1}.

\begin{proof}[Proof of Lemma \ref{LEM:COND1}]
We only consider  the case $\rho > \frac 32$, since the case $\rho = \frac 32$ essentially follows from  the same argument.

For simplicity of notation, 
let 
\begin{align}
\ta = \frac 1{\g - \al} = \frac 1 {2\g - 1 - \eps}
\label{A4}
\end{align}

\noi
for sufficiently small $\eps > 0$ (to be chosen later)
as in \eqref{A1}, where the second equality follows from \eqref{A3}.
Then, 
it follows from 
 \eqref{com2} and \eqref{IT2}
 that  \eqref{CON1} holds if 
\begin{align}
1 \ll N^{-\frac{2s}{2b+2s-3} \left(- b(2-\g + (1 - \g)^2 \theta) + 2\rho + \frac{3}{2} + \frac 32 (1 - \gamma) \theta \right)+2\rho}
\label{nn0}
\end{align}

\noi
for $N \gg1 $, 
provided that 
 the exponent of $N$ in \eqref{nn0} is positive, which is equivalent to 
\begin{align} 
\label{COND1a}
2b(s( 2-\gamma +(1-\gamma)^2\theta)+2\rho)>6\rho+3s(1+(1-\gamma)\theta).
\end{align}

We define $A$, $B$, and $S$ by setting
\begin{align}
\begin{split}
    A &= A (s, \rho, \g, \theta) := s (2 - \gamma + (1 - \gamma)^2 \theta) + 2\rho, \\ 
    B &= B (s, \rho, \g, \theta) := 6 \rho + 3s(1 + (1 - \gamma) \theta), \\ 
    S &= S (\rho, \g, \theta) := -  \frac {2 \rho}{2 - \g + (1 - \g)^2 \theta}
\end{split} 
\label{nn1}
\end{align} 

\noi
such that the condition \eqref{COND1a}
is written as 
\begin{align}
2Ab > B.
\label{A5}
\end{align}

\smallskip
\noi
$\bul$ {\bf Case 1:} 
We first consider the case $A > 0$, 
which is equivalent to $s > S$ 
in view of \eqref{nn1}.
In this case, from \eqref{A5}, 
we see that 
\eqref{CON1} can be  satisfied 
by choosing  
 \[b > \max\bigg(b_*(s, \g), \,  \frac {B}{2A}\bigg).\]

\smallskip
\noi
$\bul$ {\bf Case 2:} 
 Next, we consider the case $A \le 0$.
In this case, a direct computation with \eqref{nn1}, $ \g < 1$, 
and \eqref{A4}
shows that $B<0$.
Suppose that $A = 0$, namely $s = S$.
In this case, \eqref{A5} (and hence \eqref{CON1})
is satisfied by choosing 
$b > b_*(s, \g)$.

It remains to consider the case $A < 0$, namely $s < S$.
In this case, 
it follows from  \eqref{A5}
that 
 \eqref{CON1} holds by choosing  $b$ such that 
\begin{align*}
    b_*(s, \g) < b < \frac {B}{2A}, 
\end{align*} 

\noi 
provided that 
\begin{align}
    b_*(s, \g) <  \frac {B}{2A}.
\label{nn2}
\end{align}

\noi
From 
\eqref{bg2} and \eqref{nn1}, 
the condition \eqref{nn2} is equivalent to 
\begin{align}
    s >\max( - 2 \rho \gamma, \, G), 
\label{nn3}
\end{align}

\noi 
where $G =   G(\rho, \g, \ta)$ is given by 
\begin{align}
 G = G(\rho, \g, \ta) := \frac {- 4\rho + 3 (1 - \g)(1 - \g \theta)}{2 (2 - \g + (1 - \g)^2 \theta)}
 \label{Z1}
\end{align}

\noi
with  $\ta$ as in \eqref{A4}.
From \eqref{nn1} with \eqref{A4} and $\frac 12 < \g < 1$, 
we note that 
 the right-hand side of~\eqref{nn3} is  strictly less than $S$.

\medskip

Recall from \eqref{reg1} that 
we need to impose $s \ge - \rho$ when $\rho \ge \frac 32$.
Noting that  $- \rho > - 2 \rho \g$
for $\g > \frac 12$, 
it follows from Cases 1 and 2 that 
Lemma \ref{LEM:COND1} holds
if 
\begin{align*}
s\ge - \rho \qquad\text{and}\qquad s > G.
\end{align*}

\noi
A direct computation with \eqref{Z1} and \eqref{A4} shows that 
\begin{align}
\text{$- \rho > G$ \ \ is equivalent to }\ \ 
2\rho \big(g(\g) -\g \eps\big) > - 3 (1-\g) (1-\g+\eps), 
\label{A6}
\end{align}

\noi
where $g(\g) = \g^2 +\g - 1$.
When $g(\g) \ge  0$, 
namely, 
$\frac{\sqrt{5}-1}{2} \le \g < 1$, 
\eqref{A6} holds true by choosing sufficiently small $\eps > 0$.
This yields (iii) in \eqref{cond1}.

When $g(\g) <  0$, 
namely, 
$\frac 12 < \g <  \frac{\sqrt{5}-1}{2}$, 
(i) and (ii) in \eqref{cond1} follow from 
examining 
when~\eqref{A6} holds or not
(by  choosing sufficiently small $\eps > 0$).
\end{proof}

Finally, we present a proof of
 Lemma \ref{LEM:COND2}.

\begin{proof}[Proof of Lemma \ref{LEM:COND2}]
We closely follow the proof of Lemma \ref{LEM:COND1}.

Let $\ta$ be as in \eqref{A4}
for sufficiently small $\eps > 0$
 (to be chosen later).
Then, 
it follows from~\eqref{com2} and \eqref{IT2}
 that  \eqref{CON1} holds if 
\begin{equation}
1\ll N^{-\frac{2s}{2b+2s-3}\left(-b(2-\gamma + (1 - \gamma)^{2} \theta ) + 3 \rho 
+ \frac{3}{2} (1 - \gamma) \ta \right)+3\rho-\frac{3}{2}}
\label{A8}
\end{equation}

\noi
for $N \gg1 $, 
provided that 
 the exponent of $N$ in \eqref{A8} is positive, which is equivalent to 
\begin{equation} 
\label{COND2a}
2b\Big(s(2-\gamma+(1-\gamma)^{2}\theta)+3\rho-\frac{3}{2}\Big)> 3 s (1 + (1 - \gamma) \theta) + 9\rho-\frac{9}{2} .
\end{equation}

We define $\wt A$, $\wt B$, and $\wt S$ by setting
\begin{align} 
\begin{split}
    \wt A &= \wt A (s, \rho, \g, \theta) = s (2 - \g + (1 - \g)^2 \theta) + 3 \rho - \frac 32, \\ 
\wt  B &= \wt B (s, \rho, \g, \theta) = 3 s (1 + (1 - \g) \theta) + 9 \rho - \frac {9}{2}, \\ 
\wt S &= \wt S (\rho, \g, \theta) = 
 \frac {3 - 6\rho}{2(2 - \g + (1 - \g)^2 \theta)}
\end{split} 
\label{nnn1}
\end{align}

\noi
such that the condition \eqref{COND2a}
is written as 
\begin{align}
2\wt Ab > \wt B.
\label{A9}
\end{align}

\noi 
\smallskip
\noi
$\bul$ {\bf Case 1:} 
We first consider the case $\wt A > 0$, 
which is equivalent to $s > \wt S$ 
in view of \eqref{nnn1}.
In this case, from \eqref{A9}, 
we see that 
\eqref{CON1} can be  satisfied 
by choosing  
 \[b > \max\bigg(b_*(s, \g), \,  \frac {\wt B}{2 \wt A}\bigg).\]

\smallskip
\noi
$\bul$ {\bf Case 2:} 
 Next, we consider the case $\wt A \le 0$.
In this case, a direct computation with \eqref{nnn1}, $ \g < 1$, 
and \eqref{A4}
shows that $\wt B<0$.
Suppose that $\wt A = 0$, namely $s = \wt S$.
In this case, \eqref{A9} (and hence \eqref{CON1})
is satisfied by choosing 
$b > b_*(s, \g)$.

It remains to consider the case $\wt A < 0$, namely $s < \wt S$.
In this case, 
it follows from  \eqref{A9}
that 
 \eqref{CON1} holds by choosing  $b$ such that 
\begin{align*}
    b_*(s, \g) < b < \frac {\wt B}{2\wt A}, 
\end{align*} 

\noi 
provided that 
\begin{align}
    b_*(s, \g) <  \frac {\wt B}{2\wt A}.
\label{Xnn2}
\end{align}

\noi
From 
\eqref{bg2} and \eqref{nnn1}, 
the condition \eqref{Xnn2} is equivalent to 
\begin{align}
    s >\max\bigg(\Big(\frac 32 - 3 \rho \Big) \g, \, 
\frac {3 - 6 \rho + 3 (1 - \g)(1 - \g \theta)}{2 (2 - \g + (1 - \g)^2 \theta)}\bigg)
= \Big(\frac 32 - 3 \rho \Big) \g, 
\label{Xnn3}
\end{align}

\noi 
where the second equality follows
from a direct computation with \eqref{A4} and $\frac 12 < \g < 1$.
We also note from \eqref{nnn1} with \eqref{A4} and $\frac 12 < \g < 1$
 that 
 the right-hand side of~\eqref{Xnn3} is  strictly less than~$\wt S$.

\medskip

Suppose that 
$\frac 12 < \rho \le \frac 34$. 
Noting that  $\big(\frac 32 - 3 \rho \big) \g > \frac 32 - 3 \rho$, 
we obtain the range
\begin{align*}
s > \Big(\frac 32 - 3 \rho \Big) \g, 
\end{align*}

\noi
which also satisfies \eqref{reg1} in this case.
This yields (i) in \eqref{cond2}
for $\frac 12 < \rho \le \frac 34$.

Next, suppose that 
$\frac 34 < \rho < \frac 32$. 
By comparing $-\rho$ and $\big(\frac 32 - 3 \rho \big) \g$
from \eqref{reg1} and \eqref{Xnn3}, respectively, 
we obtain 
 \eqref{cond2}
 in this case.
\end{proof}

\begin{ackno}\rm
S.L.~would like to thank the School of Mathematics at the University of Edinburgh for its hospitality, where this
manuscript was prepared.
M.G.~was supported by 
the UKRI Frontier Research Grant 
(grant no.~EP/Z534328/1 ``Stochastic analysis of quantum fields"). 
D.G.~and T.O.~were supported by the European Research Council (grant no.~864138 ``SingStochDispDyn").
T.O.~was
also supported 
 by the EPSRC 
Mathematical Sciences
Small Grant  (grant no.~EP/Y033507/1)
and 
acknowledges support from  
the NSFC (grant no.~W2531005).
S.L.~was supported by the DFG through the Hausdorff Center for Mathematics under Germany's Excellence Strategy - GZ 2047/1, Project-ID 390685813, and SFB 1720, Project-ID 539309657.

\end{ackno}

\end{document}